\newtheorem{theorem}{Theorem}[section] 
\newtheorem{lemma}[theorem]{Lemma}     
\newtheorem{corollary}[theorem]{Corollary}
\theoremstyle{remark}
\newtheorem{remark}[theorem]{Remark}
\newtheorem{example}[theorem]{Example}
\theoremstyle{definition}
\newtheorem{definition}[theorem]{Definition}
\newcommand{\q }{/\!/}
\newcommand{\Q}{\mathbb{Q}}
\newcommand{\R}{\mathbb{R}}
\newcommand{\Z}{\mathbb{Z}}
\newcommand{\C}{\mathbb{C}}
\def\SL{{\rm SL}}
\def\bU{{S^1}}
\def\SL{{\rm SL}}
\def\PSL{{\rm PSL}}
\def\PSU{{\rm PSU}}
\def\SU{{\rm SU}}
\def\SL{{\rm SL}}
\def\PGL{{\rm PGL}}
\def\cG{{\mathcal G}}
\def\A{{\mathbb A}}
\def\bS{{\mathbf S}}
\def\bG{{\mathbf G}}
\def\fS{{\mathfrak S}}
\def\q {{/\!/}}
\def\rhohat{\widehat{\rho_1}}
\def\rhohatprime{\widehat{\rho'_1}}
\def\zetilde{\widetilde{\zeta}}
\def\yes{\text{Yes}}
\def\no{\text{No}}
\title{Stratified Langlands duality in the $A_n$ tower}
\author{Graham A. Niblo, Roger Plymen and Nick Wright}
\email{g.a.niblo@soton.ac.uk, r.j.plymen@soton.ac.uk, n.j.wright@soton.ac.uk} 
\address{Mathematical Sciences, Southampton University, SO17 1BJ, UK}
\begin{document}
\begin{abstract} Let $\bS_k$ denote a maximal torus in the complex Lie group $\bG = \SL_n(\C)/C_k$ and let $T_k$ denote a maximal torus in its compact real form $\SU_n(\C)/C_k$,  where $k$ divides $n$. 
Let $W$ denote the Weyl group of $\bG$, namely the symmetric group $\fS_n$.   
We elucidate the structure of the extended quotient $\bS_k \q W$ as an algebraic variety and of $T_k\q W$ as a topological space, in both cases describing them as bundles over unions of tori.  
Corresponding to the invariance of K-theory under Langlands duality, this calculation provides a homotopy equivalence between $T_k\q W$ and its dual $T_{\frac{n}{k}}\q W$. Hence there is an isomorphism in cohomology for the extended quotients which is stratified as a direct sum over conjugacy classes of the Weyl group. We use our formula to compute a number of examples.

\end{abstract}
\maketitle
\tableofcontents
\section{Introduction}

In \cite{NPW} we gave a new proof of the Baum-Connes conjecture for the extended affine Weyl groups associated to compact connected semi-simple Lie groups, and showed that Langlands duality for the Lie groups induces an isomorphism of $K$-theory for the corresponding extended affine Weyl groups. In his consideration of the K-theory of Hecke algebras, Solleveld \cite{S1, S2} determined the right hand side of the Baum-Connes conjecture for the affine Weyl groups associated to a number of Lie Groups including $\SL_n(\C)$.  In order to do so he computed the extended quotients $T_n\q W$ up to homotopy by constructing the quotients $\bS_n\q W$.  In this paper we consider the case of all semisimple (real and complex) Lie groups of type $A_n$, and give a detailed construction of the extended quotients $\bS_k \q W$ and $T_k\q W$. In particular we show that there is a homotopy equivalence between $T_k\q W$ and $T_{\frac{n}{k}}\q W$, inducing isomorphisms in cohomology generalising the isomorphism at the level of $K$-theory given in \cite{NPW}. Furthermore our construction shows that the homotopy equivalence is stratified by conjugacy classes of the Weyl group. 

In the complex case, the extended quotients are varieties which are unions of tori crossed with cyclic singularities, and we compute the structure of these varieties. We illustrate briefly the connection with $p$-adic groups.   Let $F$ be a local non-archimedean field such as the $p$-adic field $\Q_p$, and consider the $p$-adic group  
\[
\cG = \PGL_n(F).
\]   
The Langlands dual of $\cG$ is $\bG = \SL_n(\C)$.   Suppose further that $p > n$.   Let $\bS$ be the standard maximal torus in $\SL_n(\C)$, let $T$ be the maximal compact subgroup of $\bS$.      According to \cite[Theorem 5.2]{ABPS}, the Iwahori-spherical block in the principal series of $\cG$ admits the structure of the algebraic variety $\bS\q W$.   Furthermore,  the set of tempered representations in the Iwahori-spherical block admits the structure of the extended quotient  $T\q W$. Theorem \ref{maintheorem}  and Theorem \ref{maintheorem2}, with $k = 1$ therefore provide  explicit descriptions of these spaces of representations.   

\bigskip

We now proceed to the statement of our results. Let $C_k$ denote the cyclic group of $k$-th roots of unity in $\C$. If $k$ divides $n$ then, abusing notation, we will identify this group with the subgroup $C_k I_n$ in the centre of $\SL_n(\C)$. We will first consider the Lie group $\SL_n(\C)/C_k$ where the corresponding Weyl group is the symmetric group ${\fS}_n$. Its conjugacy classes, indexed by cycle structures, correspond to partitions of $n$.

\begin{definition} For a partition $\mu$ of $n$ given by $n=\mu_1+\ldots + \mu_c$ the numbers $\mu_1\ldots ,\mu_c$ will be called the \emph{parts} of $\mu$ and the parts have \emph{multiplicities}  $m_j=|\{i\mid \mu_i=j\}|$. We will denote the greatest common divisor of the parts of $\mu$ by $g(\mu)$ and the greatest common divisor of the multiplicities of the parts by $m(\mu)$. We also let $b(\mu)$ denote the number of distinct parts of $\mu$ and will write $c(\mu)$ for the total number of parts. For each $i$ let $p_i(\mu)$ denote the number of distinct parts of $\mu$ which have multiplicity strictly greater than $i$. We will omit the $\mu$ decoration on these symbols where the context is clear.
\end{definition}

\begin{theorem}\label{maintheorem}
Let $\bS_k$ denote a maximal torus of $\SL_n(\C)/C_k$ and $W$ denote the corresponding Weyl group. The extended quotient $\bS_k\q W$ is an algebraic variety which decomposes as a disjoint union of irreducible components. Each component is the product of a complex torus with a cyclic quotient of a complex affine space. Specifically we have a disjoint union over partitions $\mu$ of $n$:

\[
\bS_k\q W\cong \coprod_\mu\coprod\limits_{\omega\in C_{\gcd(g(\mu), k)}}\  (\C^\times)^{b(\mu)-1}\times \A_{\mu,\omega}\times X_{\mu,\omega}.
\]
\noindent The space $X_{\mu,\omega}$ is a discrete set of cardinality $\gcd(g(\mu)/|\omega|,n/k)$. The space $\A_{\mu,\omega}$ is the cyclic singularity 
\[
\A^{c(\mu)-b(\mu)}/C_{\gcd(m(\mu),\frac{k}{|\omega|})}
\]
where the generator $\eta$ of the group $C_{\gcd(m(\mu),\frac{k}{|\omega|})}$ acts by multiplication by powers of $\eta$ on the factors of $\A^{c(\mu)-b(\mu)}$. For each $i$ there are $p_i(\mu)$ factors on which the generator acts by multiplication by $\eta^i$.
\end{theorem}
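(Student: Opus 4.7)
My plan is to analyse $\bS_k \q W = \coprod_\mu \bS_k^{w_\mu}/Z_W(w_\mu)$ one conjugacy class at a time, reducing each piece by a sequence of explicit equivariant simplifications.

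The first key observation is that, since $\bS_k = \bS_1/C_k$, a class $[z]$ is $w$-fixed precisely when $wz = \omega z$ in $\bS_1$ for some $\omega \in C_k$; the cycle structure $\mu_1,\ldots,\mu_c$ of $w$ forces $\omega^{\mu_i}=1$, so $\omega \in C_{\gcd(g,k)}$, accounting for the outer disjoint union over $\omega$. Writing $Z_\omega = \{z \in \bS_1 : wz = \omega z\}$ and parameterising each cycle by its starting value $\zeta_i$, the set $Z_\omega$ is cut out of $(\C^\times)^c$ by a single monomial equation coming from $\det(z) = 1$. The centraliser $Z_W(w) = \prod_j C_j \wr \fS_{m_j}$ acts by cycle rotations ($\zeta_i \mapsto \omega^{-1}\zeta_i$) and by permuting the $\zeta_i$'s within blocks of cycles of equal length. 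Taking invariants under the rotations replaces $\zeta_i$ by $w_i = \zeta_i^{|\omega|}$, and applying elementary symmetric functions within each $\fS_{m_{j_l}}$-block yields $(\C^\times)^{m_{j_l}}/\fS_{m_{j_l}} \cong \A^{m_{j_l}-1} \times \C^\times$. Assembled, the resulting quotient is a closed subvariety of the form $V'_u \times \A^{c-b}$ in $(\C^\times)^b \times \A^{c-b}$, where $V'_u$ is cut out by a single monomial equation in the $b$ torus coordinates $u_l$, carrying a residual $C_{k/|\omega|}$-action of weight $m_{j_l}$ on $u_l$ and weight $r$ on $e_r^{(l)}$.

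The final stage is the quotient by $C_{k/|\omega|}$. As a coset of a subtorus of $(\C^\times)^b$, $V'_u$ has $g/|\omega|$ connected components, each isomorphic to $(\C^\times)^{b-1}$; the group permutes them through the character $\eta' \mapsto (\eta')^{n/g}$, and a direct computation using $k \mid n$ gives $\gcd(g/|\omega|, n/k)$ orbits, matching $|X_{\mu,\omega}|$. On a single orbit the quotient reduces to that of one component times $\A^{c-b}$ by the stabiliser $C_s$ with $s = \gcd(k/|\omega|, n/g)$. The cocharacter $\tau(\eta')=((\eta')^{m_{j_l}})_l$ encodes the $C_s$-action on the torus factor, and its kernel is precisely $K = C_{\gcd(k/|\omega|, m)}$; this $K$ acts trivially on the torus but by weights $r$ on the coordinates $e_r^{(l)}$, producing the cyclic singularity $\A_{\mu,\omega}$ with exactly $p_i(\mu)$ coordinates of weight $i$. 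The remaining quotient by $C_s/K$, now an injective finite subgroup of $(\C^\times)^{b-1}$, is an instance of the general fact that $(T \times Y)/H \cong (T/H) \times Y$ whenever $H \hookrightarrow T$ acts on $T$ by translation and its action on $Y$ extends to an algebraic $T$-action; applying this yields the claimed product $(\C^\times)^{b-1} \times \A_{\mu,\omega}$.

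The principal technical obstacle is this last untwisting: one must produce an algebraic extension of the $C_s/K$-action on the singular variety $\A_{\mu,\omega}$ to an action of the ambient torus $(\C^\times)^{b-1}$. I would handle this by lifting the weight map on the semigroup of $K$-invariant monomials through the surjection $\Lambda_T \twoheadrightarrow \widehat{C_s/K}$ of character lattices (possible because this semigroup is freely generated and $\Lambda_T$ is free abelian), then applying the standard change of variables $(t, y) \mapsto (t, \tilde\rho(t)^{-1}y)$ to convert the diagonal $C_s/K$-action into one purely on the torus factor. The remaining numerical identifications—that $\ker\tau$ has order $\gcd(k/|\omega|, m)$, that $C_s$ has order $\gcd(k/|\omega|, n/g)$, and that the coordinates $e_r^{(l)}$ of weight $i$ number $p_i(\mu)$—are direct consequences of the definitions of $g$, $m$ and $p_i$.
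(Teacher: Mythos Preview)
Your overall strategy matches the paper's: decompose by conjugacy class, parametrise the $\mu$-fixed set by $(\omega, (\zeta_i))$, quotient by the wreath-product centraliser using $|\omega|$-th powers and elementary symmetric polynomials, then analyse the residual $C_{k/|\omega|}$-action on the resulting subvariety of $(\C^\times)^b \times \A^{c-b}$ cut out by a single monomial equation. The component count $\gcd(g/|\omega|,n/k)$, the identification of the stabiliser $C_s$ with $s=\gcd(k/|\omega|,n/g)$, and of its kernel $K = C_{\gcd(m,k/|\omega|)}$ on the torus factor, all agree with the paper's computations.

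The genuine difference is in the final untwisting. The paper works entirely in coordinates: it applies an $SL_b(\Z)$ change of variables (their Lemma on cyclic quotients) to concentrate the torus action on a single coordinate $\rho_1$, then builds an explicit map $\phi$ involving a chosen $\gcd(d,k)$-th root $\widehat{\rho_1}$ and a Bezout coefficient $\delta$, and checks by hand that $\phi$ is well defined, constant on $C_{\gcd(k,q)}$-orbits, injective on the quotient, and has the right image. Your route is more structural: recognise the remaining quotient as $[(\C^\times)^{b-1}\times \A_{\mu,\omega}]/(C_s/K)$ and invoke the general fact that $(T\times Y)/H \cong (T/H)\times Y$ whenever $H$ embeds in $T$ by translations and the $H$-action on $Y$ extends to an algebraic $T$-action. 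This is cleaner and bypasses the delicate well-definedness checks for $\phi$, at the cost of having to manufacture the extension.

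There is, however, an error in your justification of that extension. The semigroup of $K$-invariant monomials is \emph{not} freely generated in general: already for $\A^2/\langle(-1,-1)\rangle$ the invariants $x^2, xy, y^2$ satisfy $(x^2)(y^2)=(xy)^2$. The lift you need exists for a different reason. The $C_s$-weight of a monomial $\prod (e_r^{(l)})^{a_{r,l}}$ depends only on the integer $\sum_{r,l} r\,a_{r,l}$, so the weight map on the invariant semigroup $S$ is the restriction to $S$ of a group homomorphism $\Z^{c-b}\to \widehat{C_s/K}$. Since $\Z^{c-b}$ is free abelian, this lifts through the surjection $\Lambda_T \twoheadrightarrow \widehat{C_s/K}$ (which is surjective precisely because $C_s/K$ embeds in $T$), and restricting the lift back to $S$ gives a $\Lambda_T$-grading on $\C[S]$, i.e.\ an algebraic $T$-action on $\A_{\mu,\omega}$ extending that of $C_s/K$. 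With this correction your argument goes through.
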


\begin{corollary}\label{corollary}
With $\bS_k$ and $W$ as above there is a homotopy equivalence

\[
\bS_k\q W\cong \coprod_\mu\  (\C^\times)^{b(\mu)-1}\times Y_{\mu}.
\]
where $Y_\mu$ is a discrete set of cardinality   $\displaystyle \frac{g(\mu)}a\sum_{s=0}^{a-1} \gcd(a,s)$ where $\displaystyle a=\gcd\left(g(\mu),\frac n {g(\mu)}, k,\frac n k\right)$.

In particular, interchanging $\SL_n(\C)/C_k$ with its Langlands dual does not change the homotopy type of the extended quotient.
\end{corollary}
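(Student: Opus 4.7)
The plan is to deduce the corollary from Theorem \ref{maintheorem} by showing that the factors $\A_{\mu,\omega}$ are contractible, so only the tori and the discrete pieces contribute to the homotopy type. Each $\A_{\mu,\omega} = \A^{c(\mu)-b(\mu)}/C_{\gcd(m(\mu), k/|\omega|)}$ is the quotient of a complex vector space by a finite group acting linearly; the radial scaling $x\mapsto tx$ is equivariant under any linear action, so it descends to a deformation retraction of $\A_{\mu,\omega}$ onto the image of the origin. Substituting back into Theorem \ref{maintheorem} and writing
$$Y_\mu := \coprod_{\omega\in C_{\gcd(g(\mu),k)}} X_{\mu,\omega}$$
immediately yields the homotopy equivalence asserted in the corollary.

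Next I would compute $|Y_\mu|$. Put $g = g(\mu)$ and $d = \gcd(g,k)$. The cyclic group $C_d$ has $\phi(e)$ elements of order $e$ for each $e\mid d$, so Theorem \ref{maintheorem} gives
$$|Y_\mu| = \sum_{e\mid d}\phi(e)\,\gcd(g/e,\,n/k).$$
The task then reduces to the elementary identity
$$\sum_{e\mid d}\phi(e)\,\gcd(g/e,\,n/k) = \frac{g}{a}\sum_{s=0}^{a-1}\gcd(a,s),\qquad a = \gcd\!\bigl(g,\tfrac n g, k,\tfrac n k\bigr),$$
which I would verify using the classical Pillai formula $\sum_{s=0}^{a-1}\gcd(a,s) = \sum_{t\mid a} t\,\phi(a/t)$. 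Both sides are built from multiplicative functions of $g$, $k$ and $n/k$, so it suffices to check the identity one prime at a time. Fixing a prime $p$ with $\alpha = v_p(g)$, $\beta = v_p(n/g)$, $\gamma = v_p(k)$ (subject to $\gamma\le \alpha+\beta$), one has $v_p(d) = \min(\alpha,\gamma)$ and $v_p(a) = \min(\alpha,\beta,\gamma,\alpha+\beta-\gamma)$; a direct case analysis on the relative sizes of $\alpha,\beta,\gamma$ reduces each side of the identity to the same explicit polynomial in $p$.

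The Langlands duality statement then drops out for free. The invariant $a$ is manifestly symmetric under $k\leftrightarrow n/k$, and $g(\mu)$ and $b(\mu)$ depend only on the partition $\mu$ and on $n$. Hence both the continuous factor $(\C^\times)^{b(\mu)-1}$ and the cardinality of $Y_\mu$ are unchanged when $C_k$ is replaced by $C_{n/k}$, so $\bS_k\q W\simeq \bS_{n/k}\q W$.

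The main obstacle is the arithmetic identity in the middle paragraph: the two expressions for $|Y_\mu|$ look rather different, and while multiplicativity reduces the check to a single prime, the per-prime verification splits into several subcases according to the order of $\alpha,\beta,\gamma$. Everything else is a routine packaging of Theorem \ref{maintheorem} and the obvious symmetry of $a$.
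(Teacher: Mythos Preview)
Your reduction to Theorem \ref{maintheorem} and your handling of the Langlands-duality statement are exactly what the paper does: the cyclic singularities $\A_{\mu,\omega}$ are contractible, $Y_\mu=\coprod_\omega X_{\mu,\omega}$, and $a$ is visibly symmetric in $k\leftrightarrow n/k$.

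The difference is in how the arithmetic identity for $|Y_\mu|$ is established. You group the elements of $C_d$ by their order and propose to check
\[
\sum_{e\mid d}\phi(e)\,\gcd(g/e,\,n/k)=\frac g a\sum_{s=0}^{a-1}\gcd(a,s)
\]
prime by prime via multiplicativity, which is valid but splits into several subcases. The paper instead parametrises $C_h$ (your $C_d$) by exponents $p=0,\dots,h-1$, so that $|\omega|=h/\gcd(h,p)$, and writes the cardinality as
\[
\sum_{p=0}^{h-1}\gcd\!\Bigl(\tfrac{g\gcd(h,p)}{h},k'\Bigr)=\sum_{p=0}^{h-1}\gcd(g,gp/h,k').
\]
Setting $h'=\gcd(g,k')$ and observing $a=hh'/g$, this becomes $\sum_{p}\gcd(h',ph'/a)=\tfrac{h'}{a}\sum_{p}\gcd(a,p)$, and periodicity of $\gcd(a,\cdot)$ collapses the sum over $p=0,\dots,h-1$ to $\tfrac{h}{a}$ copies of Pillai's sum. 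This yields the formula in three lines with no case analysis. Your Euler-$\phi$ grouping is a legitimate alternative, but the paper's parametrisation by exponent is both shorter and more uniform.
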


\begin{remark}
While passing from  $\SL_n(\C)/C_k$ to the Langlands dual group $\SL_n(\C)/C_{n/k}$ does not change the homotopy type of the extended quotient, the singularity structure of the cyclic quotients can and often does vary. In particular when $k=1$ the varieties are smooth, while the dual case $k=n$, considered by Solleveld, will always give the most singular case in the tower. In that case each $Y_\mu$ has cardinality $g(\mu)$,  recovering Solleveld's formula, \cite{S1}. 
\end{remark}

\begin{remark}
The number $\sum_{s=0}^{a-1} \gcd(a,s)$ appearing in the formula for cardinality of the set $Y_\mu$ is the $a$th value of Pillai's arithmetical function \cite{P}. The cardinality of $|Y_\mu$ can alternatively be expressed as $g(\mu)$ times the sum $\sum\limits_{d|a}\frac{\phi(d)}{d}$.\end{remark}

We now consider the real case.

\begin{theorem}\label{maintheorem2}
Let $T_k$ denote a maximal torus of $\SU_n(\C)/C_k$ and $W$ denote the corresponding Weyl group. The extended quotient $T_k\q W$ is  a disjoint union of compact orbifolds with boundary. Each component is a bundle over a compact torus with fibre a cyclic quotient of a polysimplex. Specifically we have a disjoint union over partitions $\mu$ of $n$:

\[
T_k\q W\cong \coprod_\mu\coprod\limits_{\omega\in C_{\gcd(g(\mu), k)}}\  E_{\mu,\omega}/C_{\gcd(m(\mu),\frac{k}{|\omega|})} \times X_{\mu,\omega}.
\]
\noindent The space $X_{\mu,\omega}$ is a discrete set of cardinality $\gcd(g(\mu)/|\omega|,n/k)$. The space $E_{\mu,\omega}$ is a bundle of polysimplices over a torus of dimension $b(\mu)-1$.

The group $C_{\gcd(m(\mu),\frac{k}{|\omega|})}$ preserves the torus and acts on the polysimplicial fibres, which are products of simplices of dimensions $m_j-1$, where $j$ ranges over the distinct parts of the partition $\mu$.  Each simplex can be regarded as a join of $m_j/\gcd(m(\mu),\frac{k}{|\omega|})$ simplices of dimension $\gcd(m(\mu),\frac{k}{|\omega|})-1$ on each of which which the group acts by cyclically permuting the vertices.  This action preserves the orientation on the fibres if and only if $c$ is odd or the $2$-adic norms satisfy $|c|_2<|\gcd(m(\mu),\frac{k}{|\omega|})|_2$.
\end{theorem}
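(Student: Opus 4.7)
The plan is to parallel the proof of Theorem~\ref{maintheorem}, replacing the algebraic torus $\bS_k$ by its compact real form $T_k$ throughout. The partition decomposition, the parameter $\omega$, the cardinality of $X_{\mu,\omega}$, and the residual cyclic quotient all arise from fixed-point and character-lattice calculations that are insensitive to whether one works with $\C^\times$ or $S^1$, so the only genuinely new input is the shape of the symmetric group quotient of a circle in place of a punctured plane.

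First I would decompose $T_k\q W = \coprod_{[w]} T_k^w/Z_W(w)$ over conjugacy classes of $\fS_n$ indexed by partitions $\mu$ of $n$, with centralizer $Z_W(w)\cong\prod_j(C_j\wr\fS_{m_j})$. Exactly as in the complex case, a lift $\tilde s\in T$ of $s\in T_k$ satisfies $w\tilde s = \omega\tilde s$ for some $\omega \in C_k$ with $\omega^{g(\mu)}=1$, so $\omega \in C_{\gcd(g(\mu),k)}$, and for each such $\omega$ the fixed locus is parametrised by cycle coordinates $t^{(j)}_\alpha\in S^1$ subject to the $\SL$ determinant condition. Its connected components produce the discrete factor $X_{\mu,\omega}$ of cardinality $\gcd(g(\mu)/|\omega|, n/k)$ by the same lattice computation as in Theorem~\ref{maintheorem}; the $C_j$ factors of the centralizer act trivially modulo the diagonal $C_k$, so only the $\fS_{m_j}$ factors remain to quotient by.

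The key new ingredient is that, while $(\C^\times)^{m_j}/\fS_{m_j}$ is topologically the trivial bundle $\C^{m_j-1}\times\C^\times$ via elementary symmetric polynomials, the symmetric product $\Sym^{m_j}(S^1)$ is classically the mapping torus of the cyclic permutation of $\Delta^{m_j-1}$: the multiplication map $(S^1)^{m_j}\to S^1$ descends to a $\Delta^{m_j-1}$-bundle over $S^1$ with monodromy cyclically permuting the vertices of the fibre simplex. Taking one such fibration for each distinct part size $j$ and imposing the $\SL$ determinant condition on the base torus yields the polysimplex bundle $E_{\mu,\omega}$ over a torus of dimension $b(\mu)-1$. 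The residual group $C_d$ with $d=\gcd(m(\mu), k/|\omega|)$ arises from the portion of the diagonal $C_k$ quotient not absorbed into $\omega$, and acts on each $\Delta^{m_j-1}$ factor as a $d$-fold cyclic permutation of vertices; since $d\mid m_j$ this naturally realises $\Delta^{m_j-1}$ as the join $\ast_{i=1}^{m_j/d}\Delta^{d-1}$ with cyclic vertex permutation on each sub-simplex, matching the claimed structure.

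The orientation clause reduces to a permutation-sign computation: the generator of $C_d$ acts on the $m_j$ vertices of the $j$th simplex factor as a disjoint union of $m_j/d$ length-$d$ cycles, contributing sign $(-1)^{(d-1)m_j/d}$; since $d\mid m_j$ for all $j$ one has $d\mid c(\mu)$, and the total sign on $\prod_j\Delta^{m_j-1}$ is $(-1)^{(d-1)c(\mu)/d}$, which equals $+1$ precisely when $c(\mu)$ is odd or $|c(\mu)|_2<|d|_2$, as claimed. The main obstacle throughout is the bookkeeping of how the diagonal $C_k$ quotient interacts with the wreath product centralizer on the cycle coordinates to produce the correct residual cyclic group and the correct join decomposition on the polysimplex fibres, a calculation essentially identical in form to the algebraic case and stable under the replacement of $\C^\times$ by $S^1$.
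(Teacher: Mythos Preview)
Your approach is essentially the paper's: both rest on Morton's description of $\Sym^m(S^1)$ as a simplex bundle over a circle with cyclic monodromy on the vertices, and your orientation sign $(-1)^{(d-1)c/d}$ with $d=\gcd(m(\mu),k/|\omega|)$ is exactly the computation in the paper.

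One statement needs correcting, though it does not ultimately derail the argument. You write that ``the $C_j$ factors of the centralizer act trivially modulo the diagonal $C_k$, so only the $\fS_{m_j}$ factors remain to quotient by.'' This is false as stated: the $j$-cycle $\tau_{j,i}$ multiplies the single coordinate $a_{j,i}$ by $\omega$, whereas the diagonal $C_k$ multiplies all coordinates simultaneously, so neither absorbs the other. What actually happens is that the $\tau_{j,i}$ identify $a_{j,i}$ with $\omega a_{j,i}$, replacing each circle by one of circumference $1/|\omega|$ (equivalently passing to $a_{j,i}^{|\omega|}$); only after this reduction does the residual action become the symmetric groups plus the diagonal shift. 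The paper handles this by lifting to angular coordinates $\theta_{j,i}$ and to a group $\widetilde Z(\mu,\omega)$ generated by the lattice $(\tfrac{1}{|\omega|}\Z)^{c(\mu)}$, the permutations, and the diagonal shift $\zetilde$; Morton's inequalities then carve out the polysimplex, and an explicit computation of the rank-$(b-1)$ lattice $L_{\mu,\omega}$ and its fibre-stabilising cyclic subgroup is what pins down both $E_{\mu,\omega}$ and the value $d=\gcd(m(\mu),k/|\omega|)$. Your phrases ``imposing the $\SL$ condition on the base torus'' and ``the portion of the diagonal $C_k$ not absorbed into $\omega$'' describe the right picture but suppress exactly this lattice bookkeeping, which is where the specific value of $d$ comes from.
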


The precise construction of the bundle $E_{\mu,\omega}$ is given in section \ref{proof of main2} below.

\begin{corollary}\label{corollary2}
With $T_k$ and $W$ as above there is a homotopy equivalence:

\[
T_k\q W\cong \coprod_\mu\  (\bU)^{b(\mu)-1}\times Y_{\mu}.
\]
where $Y_\mu$ is a discrete set of cardinality   $\displaystyle \frac{g(\mu)}a\sum_{s=0}^{a-1} \gcd(a,s)$ where $\displaystyle a=\gcd\left(g(\mu),\frac n {g(\mu)}, k,\frac n k\right)$.

In particular, interchanging $\SU_n(\C)/C_k$ with its Langlands dual does not change the homotopy type of the extended quotient.
\end{corollary}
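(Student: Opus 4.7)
The plan is to proceed in close parallel with the proof of Corollary \ref{corollary}, deducing the real statement from Theorem \ref{maintheorem2} together with the arithmetic identity already established in the complex case. Concretely, I would first show that each component
\[
E_{\mu,\omega}/C_{\gcd(m(\mu),k/|\omega|)}
\]
is homotopy equivalent to $(\bU)^{b(\mu)-1}$, so that
\[
T_k\q W\simeq\coprod_\mu\,\coprod_{\omega\in C_{\gcd(g(\mu),k)}}(\bU)^{b(\mu)-1}\times X_{\mu,\omega},
\]
and then identify the cardinality of $\coprod_\omega X_{\mu,\omega}$ with the number appearing in Corollary \ref{corollary}.

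For the first step, the key observation is that each polysimplicial fibre of $E_{\mu,\omega}$ is a product of simplices $\Delta^{m_j-1}$, hence convex, and in particular contractible. The cyclic group $C_c$ with $c=\gcd(m(\mu),k/|\omega|)$ acts by cyclically permuting labelled vertices within each factor, so it fixes the centroid of every fibre. Linear contraction onto this centroid supplies a $C_c$-equivariant strong deformation retraction of $E_{\mu,\omega}$ onto its base torus of dimension $b(\mu)-1$, and after passing to quotients one obtains a deformation retraction of $E_{\mu,\omega}/C_c$ onto the base torus modulo $C_c$. Invoking the explicit construction in Section \ref{proof of main2}, the induced action of $C_c$ on the base torus is by translations, so the quotient is again a torus of dimension $b(\mu)-1$, which is homotopy equivalent to $(\bU)^{b(\mu)-1}$ as required.

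For the second step, the cardinality of $\coprod_\omega X_{\mu,\omega}$ is
\[
\sum_{\omega\in C_{\gcd(g(\mu),k)}}\gcd\!\left(\frac{g(\mu)}{|\omega|},\,\frac{n}{k}\right),
\]
which is precisely the sum handled in the proof of Corollary \ref{corollary}. That arithmetic calculation rewrites it as $\tfrac{g(\mu)}{a}\sum_{s=0}^{a-1}\gcd(a,s)$ with $a=\gcd(g(\mu),n/g(\mu),k,n/k)$. Since both $g(\mu)$ and $a$ are manifestly invariant under $k\leftrightarrow n/k$, the Langlands duality symmetry of the homotopy type is immediate.

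The main obstacle is the first step: verifying that the contraction of the polysimplices can be made $C_c$-equivariant \emph{in families} and that the residual action on the base torus is genuinely by translations rather than a more elaborate affine map. Both points, however, are visible from the concrete realisation of $E_{\mu,\omega}$ given in Section \ref{proof of main2}, provided one tracks carefully the identifications used to glue the polysimplicial fibres.
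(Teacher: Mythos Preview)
Your proposal is correct and follows essentially the same route as the paper: retract each polysimplicial fibre onto its barycentre (which is fixed by the simplicial gluing maps and by the cyclic action), then invoke the arithmetic of Section~\ref{proof of corollary} for the component count and the Langlands symmetry. The one place where you work harder than necessary is the ``residual action on the base torus'': in the construction of Section~\ref{proof of main2} the cyclic group $C_{\gcd(m(\mu),k/|\omega|)}$ is by definition the stabiliser of the $\sigma_j$-coordinates, so it acts \emph{trivially} on the base and the retraction lands directly on the $(b(\mu)-1)$-torus with no further quotient to analyse.
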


The equivariant Chern character for discrete groups \cite{BC} gives a map
\[
\mathrm{ch}_W : K^j_W(T_k) \to \oplus_l  H^{j + 2l} (T_k \q W; \C)
\]
which becomes an isomorphism when $K^j_W(T_k)$ is tensored with $\C$.
Corollary \ref{corollary2} may then be viewed as  a refinement of the results of \cite{NPW}, stratifying the $K$-theory isomorphism given there over the conjugacy classes of the Weyl group and additionally giving an isomorphism at the level of cohomology. Our formulas then allow the computation of cohomology and K-theory groups and we include as appendices a number of tables of these computations.

\section{Examples}\label{examples}

\begin{example}
We consider the examples of $\SL_6(\C)$ and $\PSL_6(\C)=\SL_6(\C)/C_6$. In the former case, since $k=1$, the variable $\omega$ in the summation formula can only take the value $1$, and we give the variety $(\C^\times)^{b(\mu)-1}\times \A_{\mu,1}$ and its multiplicity $|X_{\mu,1}|$ for each partition $\mu$ of $6$.  In the latter case each partition will give rise to the same number of components as for $\SL_6(\C)$, but in this case they are indexed by the value of $\omega$ which ranges over certain powers of the primitive $6$th root of unity $\zeta$.
We list the possible values of $\omega$ (each giving a single component) and again describe the corresponding variety $(\C^\times)^{b(\mu)-1}\times \A_{\mu,\omega}$.

\[
\begin{array}{c|c|c|c|c}
 & \multicolumn{1}{r}{} &\SL_6\hfill& \multicolumn{2}{c}{\PSL_6}\\\hline
{\mu^{\mathstrut}}_{\mathstrut} & |X_{\mu,1}| & (\C^\times)^{b(\mu)-1}\times \A_{\mu,1} & \omega & (\C^\times)^{b(\mu)-1}\times \A_{\mu,\omega}\\
\hline
\multirow{6}{*}{6} & \multirow{6}{*}{6} & \multirow{6}{*}{$\A^0$} &1& \A^0\\
 & &  &\zeta & \A^0\\
 & &  &\zeta^2 & \A^0\\
 & &  &-1 & \A^0\\
 & &  &\zeta^4 & \A^0\\
 & &  &\zeta^5 & \A^0\\\hline
1+5 & 1 & \C^\times &1& \C^\times\\\hline
\multirow{2}{*}{2+4} & \multirow{2}{*}{2} &  \multirow{2}{*}{$\C^\times$} &1& \C^\times\\
&&&-1& \C^\times\\\hline
1+1+4 & 1 & \C^\times\times\A^1 &1& \C^\times\times\A^1\\\hline
\multirow{3}{*}{3+3} & \multirow{3}{*}{3} & \multirow{3}{*}{$\A^1$} &1& \A^1/\langle-1\rangle\\
&&&\zeta^2& \A^1/\langle-1\rangle\\
&&&\zeta^4& \A^1/\langle-1\rangle\\\hline
1+2+3 & 1 & (\C^\times)^2 &1& (\C^\times)^2\\\hline
1+1+1+3 & 1 & \C^\times\times\A^2 &1& \C^\times\times\A^2\\\hline
\multirow{2}{*}{2+2+2} & \multirow{2}{*}{2} & \multirow{2}{*}{$\A^2$} &1&\A^2/\langle(\zeta^2,\zeta^4)\rangle\\
&&&-1&\A^2/\langle(\zeta^2,\zeta^4)\rangle\\\hline
1+1+2+2 & 1 & \C^\times\times\A^2 &1& \C^\times\times\A^2/\langle(-1,-1)\rangle\\\hline
1+1+1+1+2 & 1 & \C^\times\times\A^3 &1& \C^\times\times\A^3\\\hline
1+1+1+1+1+1 & 1 & \A^5 &1& \A^5/\langle(\zeta^1,\zeta^2,\zeta^3,\zeta^4,\zeta^5)\rangle\\\hline
\end{array} 
\]

Note that the values of $\omega$ allowed for $\PSL_6(\C)$ are exactly the set $|X_{\mu,1}|$ for $\SL_6(\C)$.  This is a general property of the cases $k=1$ and $k=n$, however for intermediate cases the picture is more complicated. For each $\mu$, the homotopy-types of the corresponding components agree, and in many cases the components are isomorphic as varieties.  As noted above this is not true in general and indeed the component varieties for $\SL_6$ and $\PSL_6$ are not isomorphic (or even homeomorphic) in the cases $\mu=2+2+2$, $\mu=1+1+2+2$ and $\mu=1+1+1+1+1+1$.  While for $\SL_6$, the varieties are smooth and the factor $\A_{\mu,1}$ is simply an affine space,  for $\PSL_6$ the factor $\A_{\mu,\omega}$ is a cyclic singularity as classified in \cite{F}.

We remark that in the $\PSL_6$ example the spaces $\A_{\mu,\omega}$ do not in fact depend on $\omega$.  This is because $6$ is square-free, hence $\gcd(m,\frac k{|\omega|})=\gcd(m,k)$ for all $\omega$.
\end{example}

\begin{example}
We now consider the dual examples of $\SL_6(\C)/C_2$ and $\SL_6(\C)/C_3$. Here we will see that, while the components again do not depend on the variable $\omega$ the possible values of $\omega$ associated to a partition $\mu$ do depend on $k$, as does the cardinality of $X_{\mu, \omega}$. Again we also note that the singularity structure is differs between the two groups for the partitions $2+2+2$, $1+1+2+2$ and $1+1+1+1+1+1$. 
\[
\begin{array}{c|c|c|c|c|c|c}
 & \multicolumn{2}{r}{} &\SL_6/C_2\hfill& \multicolumn{3}{c}{\SL_6/C_3}\\\hline

{\mu^{\mathstrut}}_{\mathstrut} & \omega &  |X_{\mu,\omega}| & (\C^\times)^{b(\mu)-1}\times \A_{\mu,\omega} & \omega & |X_{\mu,\omega}| & (\C^\times)^{b(\mu)-1}\times \A_{\mu,\omega}\\
\hline
\multirow{6}{*}{6} & \multirow{3}{*}{1} & \multirow{3}{*}{3} & \multirow{3}{*}{$\A^0$}& \multirow{2}{*}{1} &\multirow{2}{*}{2}& \multirow{2}{*}{$\A^0$}\\
 & & & & & & \\
 & & & & \multirow{2}{*}{$\zeta^2$} &\multirow{2}{*}{2}& \multirow{2}{*}{$\A^0$}\\
 & \multirow{3}{*}{-1}& \multirow{3}{*}{3} & \multirow{3}{*}{$\A^0$} & & &\\
 & &  &&\multirow{2}{*}{$\zeta^4$} &\multirow{2}{*}{2}& \multirow{2}{*}{$\A^0$}\\
 & & & & & & \\\hline
1+5 & 1 & 1 & \C^\times &1& 1 & \C^\times\\\hline
\multirow{2}{*}{2+4} & 1 & 1 &  \C^\times&\multirow{2}{*}{1}&\multirow{2}{*}{2}&\multirow{2}{*}{$\C^\times$}\\
&-1 & 1&\C^\times&&& \\\hline
1+1+4 & 1 &1 & \C^\times\times\A^1 &1& 1 &\C^\times\times\A^1\\\hline
\multirow{3}{*}{3+3} & \multirow{3}{*}{1} & \multirow{3}{*}{3} & \multirow{3}{*}{$\A^1/\langle-1\rangle$} &1&1& \A^1\\
&&&&\zeta^2& 1&\A^1\\
&&&&\zeta^4& 1&\A^1\\\hline
1+2+3 & 1 & 1 & (\C^\times)^2 &1& 1 & (\C^\times)^2\\\hline
1+1+1+3 & 1 & 1 & \C^\times\times\A^2 &1 &1 & \C^\times\times\A^2\\\hline
\multirow{2}{*}{2+2+2} & 1&1&\A^2&\multirow{2}{*}{1}&\multirow{2}{*}{2}&\multirow{2}{*}{$\A^2/\langle(\zeta^2,\zeta^4)\rangle$}\\
&-1&1&\A^2&&&\\\hline
1+1+2+2 & 1 &1 &  \C^\times\times\A^2/\langle(-1,-1)\rangle &1&1 &  \C^\times\times\A^2\\\hline
1+1+1+1+2 & 1& 1 & \C^\times\times\A^3 &1&1 & \C^\times\times\A^3\\\hline
1+1+1+1+1+1 & 1& 1 & \A^5/\langle(-1,1,-1,1,-1)\rangle &1&1 & \A^5/\langle(\zeta^2,\zeta^4,1,\zeta^2,\zeta^4)\rangle\\\hline
\end{array} 
\]
\end{example}

\bigskip
To illustrate the possible dependence of $X_{\mu,\omega}$ and $\A_{\mu,\omega}$ on $\omega$ we must consider a value of $n$ with square factors.  Specifically we will consider $n=16$.  Since there are rather a lot of partitions of $16$ we shall just select a few examples to demonstrate the process by which the components of $\bS_k\q W$ are constructed.

\begin{example}
[The quotient variety for $SL_{16}/C_2$ corresponding to $\mu=2+2+2+2+4+4$] The partition has greatest common divisor $g=2$, so the total number of components is $|Y_\mu|=\displaystyle \frac{2}a\sum_{s=0}^{a-1} \gcd(a,s)= 3$ where $\displaystyle a=\gcd\left(2,\frac n {2}, k,\frac n k\right)=2$.

The greatest common divisor $\gcd(g,k)=2$ hence $\omega=\pm 1$. In the case $\omega=1$ we have $|X_{\mu,\omega}|=\gcd(\frac{g}{|\omega|},\frac{n}{k})=\gcd(2,8)=2$. The multiplicities are $m_2=4, m_4=2$ so $m=2$ so the cyclic singularity is the quotient of $\A^{6-2}$ by the cyclic group $C_{\gcd(m,\frac{k}{|\omega|})}=C_{\gcd(2,8)}=C_2$. This is generated by $\eta=-1$ which acts by multiplication by $(\eta^1,\eta^1, \eta^2, \eta^3)=(-1,-1,1,-1)$. The exponents of $\eta$ appearing here can easily be read as the non-zero entries of the Young tableau decorated as follows:
\medskip
\[\begin{Young}
$0$&&&\cr
$1$&&&\cr
$0$&\cr
$1$&\cr
$2$&\cr
$3$&\cr
\end{Young}
\]
The $b$ zeros appearing in the table correspond to (the trivial action on) the torus factor, which is a codimension-1 torus in $\C^{b}$.

Turning to the case $\omega=-1$ we see that $|X_{\mu,\omega}|=\gcd(\frac{g}{|\omega|},\frac{n}{k})=\gcd(1,8)=1$. The cyclic group $C_{\gcd(m,\frac{k}{|\omega|})}=C_{\gcd(2,1)}$ is trivial,  hence for $\omega=1$ we obtain a copy of $\C^\times \times \A^4$. In conclusion this partition yields three components, two copies of the space $\C^\times \times \A^4/(-1,-1,1,-1)$ indexed by $\omega=1$ and one copy of $\C^\times \times \A^4$ indexed by $\omega=-1$. \end{example}
\bigskip
We now consider the Langlands dual case $k=8$:
\begin{example} [The quotient variety for $SL_{16}/C_8$ corresponding to $\mu=2+2+2+2+4+4$]
\ 

\noindent Again $|Y_\mu|=3$, since exchanging $k=2$ with $k=8$ interchanges $k$ with $n/k$ and we obtain the same value $a=2$. As above $g=m=2$ and $\omega=\pm 1$. In the case $\omega=1$ we again have $|X_{\mu, \omega}|=2$ and we obtain two copies of the variety $\C^\times \times \A^4/(-1,-1,1,-1)$. When $\omega=-1$ we have $|X_{\mu,\omega}|=\gcd(1,2)=1$, and the cyclic group is $C_{\gcd(m,\frac{k}{|\omega|})}=C_{\gcd(2,4)}=C_2$. This gives the variety $\A^4/(-1,-1,1,-1)$ in contrast to the case $k=2$ where we obtained a copy of the affine space itself  . Hence this partition now yields three copies of the space $\C^\times \times \A^4/(-1,-1,1,-1)$, two indexed by $\omega=1$ and one indexed by $\omega=-1$.
\end{example}

We remark that for this particular choice of partition, taking $k=4$ (the self dual case) would yield the same quotient variety as we obtain for $k=8$. By way of contrast consider the components arising for the partition $\mu=4+4+4+4$.

\begin{example}[The quotient variety for $SL_{16}/C_4$ and $SL_{16}/C_8$ corresponding to the partition $\mu=4+4+4+4$]
We have $g=m=4$, and $\omega$ lies in $C_{\gcd(g,k)}=C_4$ hence $\omega\in\{\pm 1,\pm i\}$. With $\omega=1$ we have $|X_{\mu,\omega}|=\gcd(\frac{g}{|\omega|},\frac{n}{k})=\gcd(4,4)=4$. The cyclic singularity is the quotient of $\A^{4-1}$ by the cyclic group $C_{\gcd(4,4)}=C_4$. This is generated by $\eta=i$ which acts by multiplication by $(\eta^1,\eta^2, \eta^3)=(i,-1,-i)$:\medskip
\[\begin{Young}
$0$&&&\cr
$1$&&&\cr
$2$&&&\cr
$3$&&&\cr
\end{Young}
\]

Now set $\omega=-1$. We have $|X_{\mu,\omega}|=\gcd(2,4)=2$. The cyclic singularity is the quotient of $\A^{3}$ by the cyclic group $C_{\gcd(4,2)}=C_2$. This is generated by $-1$ which acts by multiplication by $(-1,1,-1)$.

For each of $\omega=\pm i$, $|X_{\mu,\omega}|=\gcd(1,4)=1$ and the cyclic group acting is trivial because $k/|\omega|=1$ so these two elements yield, between them, two copies of the affine space $\A^3$.

Hence this partition furnishes $8$ components, four of them isomorphic to the cyclic singularity $\A^3/(i,-1,-i)$, two of them isomorphic to $\A^3/(-1,1,-1)$ and two of them isomorphic to $\A^3$.

Now turning to the case  $k=8$ again we have, $\omega\in\{\pm 1,\pm i\}$ since $\gcd(4,8)=4$. 
With $\omega=1$ we have $|X_{\mu,\omega}|=\gcd(\frac{g}{|\omega|},\frac{n}{k})=\gcd(4,2)=2$. The cyclic singularity is the quotient of $\A^{3}$ by the cyclic group $C_{\gcd(4,8)}=C_4$, yielding $\A^3/(i,-1,-i)$.
For $\omega=-1$, $|X_{\mu,\omega}|=\gcd(2,2)=2$. The cyclic singularity is the quotient of $\A^{3}$ by the cyclic group $C_{\gcd(4,4)}=C_4$. This is generated by $i$ which acts by multiplication by $(i,-1,-i)$.

For each of $\omega=\pm i$, $|X_{\mu,\omega}|=\gcd(1,2)=1$ and the cyclic group acting is has order $\gcd(4,2)=2$. Hence these two elements yield, between them, two copies of the cyclic singularity $\A^3/(-1,1,-1)$.

Hence this partition now furnishes only $6$ components (rather than the $8$ appearing in the $k=4$ case). Four of these are isomorphic to the cyclic singularity $\A^3/(i,-1,-i)$ and two of them are isomorphic to $\A^3/(-1,1,-1)$. In general the number of components grows with $\gcd(k, n/k)$. However if $n$ is square free the number of components will be constant in $k$. See the examples in table \ref{K(n,k)}.

\end{example}

\section{The extended quotient}

Let $\Gamma$ be a finite group acting on  a complex affine variety $X$ 
by automorphisms,  
\[ 
\Gamma \times X \to X.
\]
The quotient variety $X/\Gamma$ is obtained by collapsing each orbit to a point. 

 For $x\in X $, $\Gamma_x$ denotes the stabilizer group of $x$:
$$ 
\Gamma_x = \{\gamma\in \Gamma : \gamma x = x\}.
$$
Let $c(\Gamma_x)$ denote the set of conjugacy classes of  $\Gamma_x$. The extended quotient 
is obtained by replacing the orbit of $x$ by $c(\Gamma_x)$. This is done in the following way.

 Set 
\[\widetilde{X}: = \{(\gamma, x) \in \Gamma \times X : \gamma x = x\}.
\] 
Then $\widetilde{X}$ is an affine variety and is a subvariety of $\Gamma \times X$. 
The group $\Gamma$ acts on $\widetilde{X}$:
\begin{align*}
\Gamma &\times \widetilde{X} \to \widetilde{X}\\
\alpha(\gamma, x) = & (\alpha\gamma \alpha^{-1}, \alpha x), \quad\quad \alpha \in \Gamma,
\quad (\gamma, x) \in \widetilde{X}.
\end{align*}

\noindent The (geometric) extended quotient  $ X/\!/\Gamma $ is, by definition, the usual quotient for the action of 
$\Gamma$ on $\widetilde{X}$:
\[
X \q \Gamma: = \widetilde{X} / \Gamma.
\]
The projection 
\[
\widetilde{X} \to X,  \quad (\gamma, x) \mapsto x
\]
 is $\Gamma$-equivariant 
and so passes to quotient spaces to give a morphism of affine varieties
\[
\rho\colon  X/\!/\Gamma \to X/\Gamma.
\]   
This map is referred to as the projection of the extended quotient onto the ordinary quotient.
The inclusion 
\[
X\hookrightarrow \widetilde{X}, \quad x \mapsto (e,x)
\]
where $e$ is the identity element of $\Gamma$, is $\Gamma$-equivariant and so passes to quotient spaces to give an inclusion of affine 
varieties $X/\Gamma\hookrightarrow X/\!/\Gamma$. This is referred to as the inclusion 
of the ordinary quotient in the extended quotient. 

Similarly there is a projection map from the quotient space $X\q\Gamma$ to the set of conjugacy classes $c(\Gamma)$ in $\Gamma$. Selecting a set $C$ of representatives for the conjugacy classes, each element of $X\q\Gamma$  can be represented by a pair $(\gamma, x)$ with $\gamma\in C$. The point $x$ lies in the fixed set $X^\gamma$ and is determined up to the action of the centraliser $Z(\gamma)$ of $\gamma$, thus the extended quotient may be decomposed as a disjoint union of components:

\[
X\q\Gamma = \coprod_{\gamma\in C} X^\gamma/ Z(\gamma).
\]

If $X$ is a topological space on which $\Gamma$ acts by homeomorphisms, then the same procedure will create the topological space
$X \q \Gamma$.

\begin{example}
Consider the action of the Coxeter group $\langle s_1, s_2, s_3\mid s_1^2=s_2^2=s_3^2=(s_1s_2)^3=(s_2s_3)^3=(s_3s_1)^3=e\rangle$ on the plane generated by reflections in the sides of an equilateral triangle. The triangle is a strict fundamental domain so the  quotient of the plane under the action can be identified with it. We regard this triangle as a cell complex in the natural way, and note that the stabiliser of any point in the interior of the triangle is trivial, so these orbit points do not ramify in the extended quotient. Points in the interior of an edge of the triangle have stabiliser isomorphic to the cyclic group of order $2$ and so these edges are doubled. The edges corresponding to the conjugacy class of the identity are attached to the triangle (which is also labelled by the identity). The vertices have stabilisers isomorphic to the dihedral group $D_3$ which has three conjugacy classes corresponding to the decomposition of the group into the identity element, the reflections and the rotations, so each vertex ramifies into three vertices. The vertices labelled by the identity element are attached to the triangle, those labelled by the reflection conjugacy class are attached at the endpoints to the two edges labelled by reflections lying in that conjugacy class. The remaining three points remain unattached so the extended quotient  has five components, one closed triangle, one triangular loop and three additional points. 
\end{example}

\section{On cyclic quotients}

Quotients by cyclic group actions play a key role in this paper. In particular we will be concerned with cyclic quotients of real and complex tori, and of complex affine space. The varieties that occur in the latter case were classified in \cite{F} and we refer the reader there for details.  Here we record a useful technical lemma that allows us to simplify the descriptions of the quotients arising in the former case. It allows us to change coordinates on  a torus to push the action entirely onto one factor, and can be regarded as an elementary generalisation of Bezout's identity. 

\begin{lemma}\label{coordinatechange}
Let $G$ be an abelian group and $\zeta\in G$. Let $T=G^n$ and $p_1, \ldots , p_n\in \Z$ so that $T$ is equipped with an action of the cyclic group $\langle \zeta \rangle$ defined by $\zeta\cdot (g_1,\ldots, g_n)=(\zeta^{p_1}g_1, \ldots, \zeta^{p_n}g_n)$. Then there is an  automorphism of $T$ which intertwines the given action with the action $\zeta\diamond(h_1,\ldots, h_n):= (\zeta^d h_1, h_2, \ldots, h_n)$, where $d=\gcd(p_1, \ldots, p_n)$. The automorphism is algebraic in the sense that each coordinate is given by a monomial in the variables.
\end{lemma}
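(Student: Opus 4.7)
The plan is to reduce the lemma to a standard fact about integer matrices acting on the exponents. Any matrix $M = (M_{ij}) \in \GL_n(\Z)$ gives an algebraic automorphism of $T = G^n$ by the monomial formula
\[
\phi_M(g_1, \ldots, g_n) = \left(\prod_{j=1}^n g_j^{M_{1j}}, \ldots, \prod_{j=1}^n g_j^{M_{nj}}\right),
\]
with inverse $\phi_{M^{-1}}$ (well-defined over $\Z$ since $\det M = \pm 1$). First I would verify how such $\phi_M$ transports the given action. Writing $p = (p_1, \ldots, p_n)^T$, a direct computation gives
\[
\phi_M(\zeta^{p_1}g_1, \ldots, \zeta^{p_n}g_n) = \left(\zeta^{(Mp)_i} \prod_j g_j^{M_{ij}}\right)_{i} = \left(\zeta^{(Mp)_1}, \ldots, \zeta^{(Mp)_n}\right)\cdot \phi_M(g),
\]
so $\phi_M$ intertwines the action with exponent vector $p$ with the action whose exponent vector is $Mp$.

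Thus the lemma reduces to producing $M \in \GL_n(\Z)$ with $Mp = (d, 0, \ldots, 0)^T$, i.e. to putting the column $p$ into Hermite normal form. I would prove this by induction on $n$ starting from the case $n=2$. For $n=2$, Bezout gives $x, y \in \Z$ with $xp_1 + yp_2 = \gcd(p_1, p_2) =: e$, and then
\[
M_0 = \begin{pmatrix} x & y \\ -p_2/e & p_1/e \end{pmatrix}
\]
has determinant $1$ and satisfies $M_0 (p_1, p_2)^T = (e, 0)^T$. For the inductive step I would apply $M_0$ (extended by the identity on the remaining coordinates) to pairs $(1,2), (1,3), \ldots, (1,n)$ in turn, at each stage replacing the first entry by the gcd of the current first entry and the next one, while zeroing out that next entry. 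After $n-1$ such steps the resulting product $M$ lies in $\GL_n(\Z)$ and satisfies $Mp = (d, 0, \ldots, 0)^T$ with $d = \gcd(p_1, \ldots, p_n)$.

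Substituting this $M$ into the intertwining formula above shows that $\phi_M$ carries the original $\langle \zeta \rangle$-action into $\zeta\diamond(h_1,\ldots,h_n) = (\zeta^d h_1, h_2, \ldots, h_n)$, as required. The only point requiring care is that each coordinate of $\phi_M$ and of its inverse is a \emph{monomial} in the variables (not a Laurent polynomial in general); this follows immediately from $M$ and $M^{-1}$ having integer entries, and this is exactly why we must work inside $\GL_n(\Z)$ rather than, say, $\Mat_n(\Z)$. There is no serious obstacle; the content is genuinely just the Hermite normal form statement for a single column, packaged as an automorphism of $G^n$.
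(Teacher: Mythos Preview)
Your proof is correct and follows essentially the same approach as the paper: both use the monomial action of $\SL_n(\Z)$ (or $\GL_n(\Z)$) on $G^n$ and reduce the problem to finding an integer matrix putting the exponent vector into the form $(d,0,\ldots,0)$. The only cosmetic differences are that the paper constructs the inverse matrix (choosing $A$ with first column $(p_1/d,\ldots,p_n/d)$ so that $A$ intertwines $\diamond$ with $\cdot$ rather than the other way around), and simply invokes the Euclidean algorithm where you spell out the $2\times 2$ Bezout step and the induction.
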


\begin{proof}
For an element $A\in SL_n(\Z)$  we define an automorphism of $T$ as follows. For $g=(g_1,\ldots, g_n)\in T$ define $Ag\in T$ by the formula 

\[
(Ag)_i=\prod\limits_j g_j^{a_{ij}}.
\]

\noindent It is straightforward to verify that this is an action of $SL_n(Z)$ by automorphisms of $T$. Now by the Euclidean algorithm we may choose an element $A\in  SL_n(\Z)$ such that the first column is the (transpose of the) vector $(p_1/d, \ldots, p_n/d)$.

\[
(A(\zeta\diamond g))_i=\prod\limits_{j=1}^n (\zeta\diamond g)_j^{a_{ij}}=(\zeta^d g_1)^{a_{i1}}\prod\limits_{j=2}^n g_j^{a_{ij}}=\zeta^{d a_{i1}}\prod\limits_{j=1}^n g_j^{a_{ij}}=\zeta^{p_i}(Ag)_i.
\]
 So, $A(\zeta\diamond g)=\zeta\cdot(Ag)$ as required.
\end{proof}

\section{Proof of  Theorem \ref{maintheorem}}\label{maintheoremsection}

As usual we identity the conjugacy classes of $W=\fS_n$ with partitions of $n$. Let $\mu$ be a partition of $n$ and let $m_j$ for $j=1,\dots n$ denote the multiplicity of $j$ in the partition $\mu$. We view $\mu$ as the permutation of $1,\ldots, n$ with cycle type $(1)^{m_1}(2)^{m_2}\dots(n)^{m_n}$. Specifically we take $\mu$ to fix the first $m_1$ elements, transpose the next $m_2$ pairs, etc., thus selecting representatives of the conjugacy classes as required. For each $\mu$ we will compute the fixed set and identify the quotient by the centraliser $Z(\mu)$ of $\mu$. 

Let $\bS$ be the standard maximal torus in $\SL_n(\C)$ consisting of diagonal matrices such that the product of the diagonal entries is $1$. Let $\bS_k$ denote the maximal torus $\bS/C_k$ in $\SL_n(\C)/C_k$. An element of $\bS_k$ is fixed by $\mu$ if selecting a representative $s\in \bS$ there exists $\omega\in C_k$ such that $\mu\cdot s=\omega s$. It follows that for a cycle of $\mu$ of length $j$, the corresponding coordinates of $s$ must have the form $a,\omega a, \omega^2 a,\dots \omega^{j-1}a$. Moreover $\omega^{j}a$ must equal $a$ thus $\omega$ must satisfy the equation $\omega^j=1$. So the order of $\omega$ divides the length of each cycle in $\mu$. Since $\omega$ is an element of $C_k$ it follows that this order, written $|\omega|$, divides $h=\gcd(g(\mu),k)$. 

Elements of $\bS_k$ fixed by $\mu$ are thus represented by elements of $\bS$ of the form
$$s=(a_{1,1},\dots,a_{1,m_1},a_{2,1,}\omega a_{2,1},\dots,a_{2,m_2},\omega a_{2,m_2},\dots).$$
We note that some of the multiplicities $m_j$ will be zero, in which case the corresponding string will be empty.

We denote the $C_k$ orbit of $s$ by $[a_{1,1},\dots,a_{1,m_1},a_{2,1}\dots,a_{2,m_2},\dots]$, omitting terms containing a 
power of $\omega$ as these are determined by the others. Note that the total number of coordinates remaining in this notation is the number of parts $c(\mu)$ of $\mu$.

The condition that $s$ lies in $\bS$ implies that
\[
\Omega\Big(\prod_{i=1}^{m_1}a_{1,i}\Big)\Big( \prod_{i=1}^{m_2}a_{2,i}\Big)^2\Big( \prod_{i=1}^{m_3}a_{3,i}\Big)^3\dots=1,
\]
where $\Omega$ is a power of $\omega$. Specifically $\Omega=\omega^\alpha$ where $\alpha={\sum_{j=1}^n m_j\frac{(j-1)j}2}$. Some of the products will be empty; for each of the non-empty products the exponent is divisible by $g$ allowing us to write the formula as
\begin{equation}
\left(\Big(\prod_{i=1}^{m_1}a_{1,i}\Big)^{1/g}\Big( \prod_{i=1}^{m_2}a_{2,i}\Big)^{2/g}\Big( \prod_{i=1}^{m_3}a_{3,i}\Big)^{3/g}\dots\right)^g=\Omega^{-1}.
\label{eq: omega}
\end{equation}

The permutation $\mu$ is the product of disjoint cycles $\mu=\tau_{1,1}\dots\tau_{1,m_1}\tau_{2,1}\dots\tau_{2,m_2}\dots$ where each $\tau_{j,i}$ is a $j$-cycle.  The centraliser $Z(\mu)$ is the group generated by the cycles $\tau_{j,i}$ along with permutation groups $\fS_{m_1},\fS_{m_2},\dots$ with $\fS_{m_j}$ acting by permuting the $m_j$ parts of size $j$. Hence $\fS_{m_j}$ simply permutes the coordinates $a_{j,1},\dots,a_{j,m_j}$.

The action of $\tau_{j,i}$ on $[a_{1,1},\dots,a_{1,m_1},a_{2,1}\dots,a_{2,m_2},\dots]$ has the effect of multiplying the coordinate $a_{j,i}$ by $\omega$ while leaving all other coordinates fixed.

For each $j,l$ let $\sigma_{j,l}$ be the $l$-th degree symmetric polynomial in the variables $a_{j,1}^{|\omega|}, \ldots, a_{j,m_j}^{|\omega|}$. By construction the actions of $\tau_{j,i}$ and $\fS_{m_j}$ on the coordinates leave $\sigma_{j,l}$ fixed for all $j,l$.

Conversely suppose that $(a_{j,i})$ and $(a'_{j,i})$ are coordinates yielding the same values of $\sigma_{j,l}$ for all $j,l$. It follows that the powers $a_{j,i}^{|\omega|}$ and $(a'_{j,i})^{|\omega|}$ agree up to a permutation (for each $j$) of the $i$ indices. Hence multiplying each $a_{j,i}$ by some power of $\omega$ and permuting we obtain $a'_{j,i}$. So the coordinates are identified by the action of the centraliser $Z(\mu)$ precisely when they yield the same values of $\sigma_{j,l}$.

The action of the generator $\zeta=e^{2\pi i/k}$ of $C_k$ on the coordinates has the effect of multiplying $\sigma_{j,l}$ by $\zeta^{l|\omega|}$.  Note that $\zeta^{k/|\omega|}$ acts trivially on all the coordinates, hence it is really an action of $C_{k/|\omega|}$.

To summarise we have now identified the quotient of the $\mu$-fixed set by the centraliser $Z(\mu)$ with a subspace of $\A^{c(\mu)}/C_{k/|\omega|}$.  We denote a point of the image by $[\sigma_{j,l}]$.

In order to identify the subspace we reformulate Equation \ref{eq: omega} in terms of the variables $\sigma_{j,l}$. We have
\begin{equation}
\left(\prod_{j} \sigma_{j,m_j}^{j/g}\right)^{g/|\omega|}=\Omega^{-1}.
\label{eq: sigma}
\end{equation}
The action of the generator $\zeta$ has the effect of multiplying $\prod_{j} \sigma_{j,m_j}^{j/g}$ by $\zeta^q$ where $q=\sum\limits_j |\omega|jm_j/g=|\omega|n/g$. Now $\zeta^q$ generates a group of order
$$\frac k{\gcd(k,q)}=\frac{kg}{\gcd(kg,n|\omega|)}=\frac{g}{\gcd(g,k'|\omega|)}$$
where $k'=n/k$.

By Equation \ref{eq: sigma} the product $\prod_{j} \sigma_{j,m_j}^{j/g}$ can take $g/|\omega|$ values, however factoring out by the (free) action of $\zeta^q$ gives 
$$\frac g{|\omega|} \cdot \frac{\gcd(g,k'|\omega|)}g=\gcd(g/|\omega|,k')$$
components. Let $\xi$ be one of the $g/|\omega|$ roots of $\Omega^{-1}$ and consider the variety defined by the equation
\begin{equation}\label{subvariety}
\prod_{j} \sigma_{j,m_j}^{j/g}=\xi.
\end{equation}
In these $b(\mu)$ variables this equation defines a complex torus of codimension $1$. However there are a further $c(\mu)-b(\mu)$ variables $\sigma_{j,l}$ with $l<m_j$ which are unconstrained. So we obtain a variety of the form  $(\C^\times)^{b(\mu)-1}\times \A^{c(\mu)-b(\mu)}$, and 
the component of the extended quotient corresponding to this variety is given by factoring out the action of the subgroup $C_{\gcd(k,q)}=\langle \zeta^{k/\gcd(k,q)}\rangle$. We recall that this generator acts on each coordinate $\sigma_{j,l}$ of the the variety as multiplication by $\zeta^{l |\omega| k/\gcd(k,q)}$. 

First we will consider the action on the constrained coordinates $\sigma_{j,m_j}$. Here $\zeta^{k/\gcd(k,q)}$ acts by multiplication by $\zeta^{m_j |\omega| k/\gcd(k,q)}$, and we note that this is the restriction of the natural multiplication action on the larger variety $(\C^\times)^b$. Following the proof of  Lemma  \ref{coordinatechange}  we select an element $A\in SL_b(\Z)$ with first column equal to the transpose of the vector $\left (\frac{m_{j_1}}{m(\mu)}, \ldots, \frac{m_{j_b}}{m(\mu)}\right)$, where the indices $j_i$ denote the distinct parts of $\mu$. Abusing notation we will use $j$ to denote an index in the set $J=\{j_1, \ldots, j_b\}$.

We now transform the coordinates $\sigma_{j,m_{j}}$ by the inverse matrix $A^{-1}$ to obtain new coordinates which we denote by $\rho_i$. Now (again abusing notation) the coordinates $\sigma_{j,m_j}$ are recovered by the formula $\displaystyle \sigma_{j,m_j}= \prod_{i=1}^b \rho_i^{a_{ji}}$. 

The subvariety defined by equation \ref{subvariety} is transformed by $A^{-1}$ to the variety

\begin{equation}\label{newvariety}
\prod_{i=1}^b\prod_{j\in J} \left(\rho_i^{a_{ji}}\right)^{j/g}=\prod_{i=1}^b  \rho_i^{\sum_{j\in J}a_{ji}j/g}=\xi.
\end{equation}
which we denote by $V_{\mu,\omega,\xi}$. Recall that there are still $c(\mu)-b(\mu)$ unconstrained coordinates $\sigma_{j,l}, j<m_j$ in this variety for which we have not changed variables.

Now let $d=\frac{m(\mu)|\omega|k}{\gcd(k,q)}$. According to the Lemma, in these coordinates the action of the generator multiplies the first coordinate $\rho_1$ by $\zeta^{d}$ and leaves $\rho_2, \ldots, \rho_b$ invariant. Let $\lambda$ denote the order of $\zeta^{d}$. i.e., $\lambda=\frac{k}{\gcd(d,k)}$. We will show that the quotient $\kappa:=\frac{\sum_{j\in J}a_{j1}j/g}{\lambda}$ is an integer which will allow us to rewrite Equation \ref{newvariety} in the form

\begin{equation}\label{newervariety}
(\rho_1^\lambda)^\kappa \cdot \prod_{i=2}^b  \rho_i^{\sum_{j\in J}a_{ji}j/g}=\xi.
\end{equation}
where $\rho_1^\lambda$ is left invariant by the action.

First we compute
\[
\lambda= \frac{k}{\gcd\left(\frac{m(\mu)|\omega| k}{\gcd(k,q)},k\right)}=\frac{\gcd(k,q)}{\gcd\left(m(\mu)|\omega|,\gcd(k,q)\right)}=\frac{\gcd(k,q)}{\gcd\left(m(\mu)|\omega|,k,q\right)}=\frac{\gcd(k,q)}{\gcd\left(m(\mu)|\omega|,k\right)},
\]
since $m(\mu)|\omega|$ divides $q=\frac{n}{g}|\omega|$.

On the other hand, the exponent of $\rho_1$ is $\sum\limits_j a_{j1}j/g=\sum\limits_j \frac{m_{j}}{m(\mu)}\frac{j}{g}=\frac{n}{m(\mu)g}$. Thus 
\[
\kappa=\frac{n \gcd(m(\mu)|\omega| ,k)}{m(\mu)g \gcd(k,q)}=\frac{\gcd(n m(\mu)|\omega|, nk)}{\gcd(m(\mu)gk, m(\mu)gq)}=\frac{\gcd(nk, n m(\mu)|\omega|)}{\gcd(m(\mu)gk, nm(\mu)|\omega|)},
\]
which is an integer because $m(\mu)g$ divides $n$.

\bigskip
To recap, we have changed coordinates on the constrained variables replacing $\sigma_{j,m_j}$ by $\rho_i$, and pushing the action there entirely onto the first coordinate $\rho_1$, $\rho_1\mapsto \zeta^d\rho_1$.

We will now  consider the unconstrained variables $\sigma_{j,l}$ where $l < m_j$, where the action is (still) given  by $\sigma_{j,l}\mapsto \zeta^{l |\omega| k/\gcd(k,q)}\sigma_{j,l}$. Let 
\[
\eta=  \zeta^{\lambda|\omega| k/\gcd(k,q)}= \zeta^{k|\omega|/\gcd(m(\mu)|\omega|,k)}=\zeta^{k/\gcd(m(\mu),k/|\omega|)}
\] which generates the cyclic group $C_{\gcd(m(\mu),k/|\omega|)}$. This acts on the affine space of dimension $c(\mu)-b(\mu)$ spanned by the unconstrained variables, with the generator $\eta$ acting by $\sigma_{j,l}\mapsto \eta^l\sigma_{j,l}$. The cyclic singularity $\A_{\mu,\omega}$ is defined to be the quotient variety.  It is easy to see that for each $l$ there are $p_l(\mu)$ coordinates on which the action multiplies by $\eta^l$.

We will next define a map $\phi$ from the variety $V_{\mu, \omega, \xi}\subset (\C^\times)^b\times \A^{c-b}$ to $(\C^\times)^b\times \A_{\mu, \omega}$, with image a codimension 1 subvariety.

Recall that $\lambda = \frac{k}{\gcd(d,k)}$ so by Bezout's Lemma there exists an integer $\delta$ such that 

\begin{equation}\label{congruencemodlambda}
\frac{d}{\gcd(d,k)}\cdot \delta \equiv 1 \mod \lambda.
\end{equation}
Let $\rhohat$ denote a $\gcd(d,k)$-th root of $\rho_1$. While this is not uniquely defined, we will see that the construction that follows is independent of the choice. Set $\alpha_l=\dfrac{l |\omega| k\delta(\lambda-1)}{\gcd(k,q)}$, and define the map $\phi$ by 

\[
\Big((\rho_1, \rho_2,\ldots,\rho_b),(\sigma_{j,l})_{j\in J, l<m_j}\Big)\mapsto \Big((\rho_1^\lambda, \rho_2,\ldots,\rho_b),(\rhohat^{\alpha_l}\sigma_{j,l})_{j\in J, l<m_j}\Big).
\]

To see that this is well defined we need to verify that it is independent of the choice of $\rhohat$, but $\rhohat$ is defined up to multiplication by a $\gcd(d,k)$-th root of unity, all of which are powers of the primitive root $\zeta^\lambda$, so it suffices to show that replacing $\rhohat$ by $\zeta^\lambda\rhohat$ does not change the image. 

We have 

\[
\zeta^{\lambda\alpha_l}= \left(\zeta^{\frac{\lambda l |\omega| k}{\gcd(k,q)}}\right)^{\delta(\lambda-1)}=\left(\eta^l\right)^{\delta(\lambda-1)}
\]
Hence as points in the quotient space $\A_{\mu,\omega}$,
\[
\left((\zeta^\lambda\rhohat)^{\alpha_l}\sigma_{j,l}\right)_{j\in J, l<m_j}=\left(\left(\eta^l\right)^{\delta(\lambda-1)}\rhohat^{\alpha_l}\sigma_{j,l}\right)_{j\in J, l<m_j}=\Big(\rhohat^{\alpha_l}\sigma_{j,l}\Big)_{j\in J, l<m_j}.
\]

Now we will show that $\phi$ is constant on orbits of the action of  $C_{\gcd(k,q)}=\langle \zeta^{k/\gcd(k,q)}\rangle$ of $V_{\mu,\omega,\xi}$. We have
\[
\phi\Big((\zeta^d\rho_1, \rho_2,\ldots,\rho_b),(\zeta^{l |\omega| k/\gcd(k,q)}\sigma_{j,l})\Big) = \Big((\zeta^{d\lambda}\rho_1^\lambda, \rho_2,\ldots,\rho_b),((\zeta^{\frac{d}{\gcd(d,k)}}\rhohat)^{\alpha_l}\zeta^{l |\omega| k/\gcd(k,q)}\sigma_{j,l})\Big)
\]
Now $d\lambda=\frac{dk}{\gcd(d,k)}$ which is divisible by $k$ so $\zeta^{d\lambda}=1$. Also we compute

\[
\frac{d}{\gcd(d,k)}\alpha_l+\frac{l |\omega| k}{\gcd(k,q)}=\left(\frac{d\delta(\lambda-1)}{\gcd(d,k)}+1\right)\frac{l |\omega| k}{\gcd(k,q)}.
\]

Using Equation \ref{congruencemodlambda} we now observe that $\displaystyle \frac{d\delta(\lambda-1)}{\gcd(d,k)}+1\equiv 0, \mod \lambda$ and set $r=\frac{1}{\lambda}\left(\frac{d\delta(\lambda-1)}{\gcd(d,k)}+1\right)$. It then follows that $\zeta^{\frac{d\alpha_l}{\gcd(d,k)}}\zeta^{l |\omega| k/\gcd(k,q)}=\eta^{rl}$, yielding
\begin{align*}
\phi\Big((\zeta^d\rho_1, \rho_2,\ldots,\rho_b),(\zeta^{l |\omega| k/\gcd(k,q)}\sigma_{j,l})\Big) &= \Big((\rho_1^\lambda, \rho_2,\ldots,\rho_b),(\eta^{rl}\rhohat^{\alpha_l}\sigma_{j,l})\Big)\\
&=\Big((\rho_1^\lambda, \rho_2,\ldots,\rho_b),(\rhohat^{\alpha_l}\sigma_{j,l})\Big)\\
&=\phi\Big((\rho_1, \rho_2,\ldots,\rho_b),(\sigma_{j,l})\Big)
\end{align*}
in $(\C^\times)^b\times \A_{\mu,\omega}$, as required.

We conclude that $\phi$ induces a map from the quotient of $V_{\mu,\omega,\xi}$ by the action of $C_{\gcd(k,q)}$ to the variety $(\C^\times)^b\times \A_{\mu,\omega}$. We will now show that this induced map is injective.

Suppose then that $
\Big((\rho_1, \rho_2,\ldots,\rho_b),(\sigma_{j,l})\Big)$ and $
\Big((\rho'_1, \rho'_2,\ldots,\rho'_b),(\sigma'_{j,l})\Big)$ have the same image under $\phi$. Then $\rho_1$ and $\rho'_1$ differ by a a power of $\zeta^d$, noting that this is a primitive $\lambda$-th root of unity. It follows that up to the action of $C_{\gcd(k,q)}$ the first coordinates agree. Moreover we have noted that applying the action does not change the image under $\phi$, so without loss of generality we may assume that $\rho_i'=\rho_i$ for all $i$. We will  show that the $\sigma_{j,l}$ coordinates must now agree up to the action of the subgroup of $C_{\gcd(k,q)}$ which stabilises the $\rho$ coordinates.

We are given that the coordinates $\rhohat^{\alpha_l}\sigma_{j,l}$ agree with $\rhohatprime^{\alpha_l}\sigma'_{j,l}$ up to the action of $\langle \eta\rangle$, i.e., for some $r$
\[
\rhohatprime^{\alpha_l}\sigma'_{j,l}=\eta^{rl}\rhohat^{\alpha_l}\sigma_{j,l}
\]

But since $\rho_1=\rho_1'$, we may choose the roots $\rhohat, \rhohatprime$ to be equal, giving 
\[
\sigma'_{j,l}=\eta^{rl}\sigma_{j,l}=\zeta^{rl\lambda|\omega| k/\gcd(k,q)}\sigma_{j,l}=(\zeta^{l|\omega| k/\gcd(k,q)})^{r\lambda}\sigma_{j,l}.
\]
But then, as $\zeta^{dr\lambda}=1$, we have
\[
(\zeta^{\frac{k}{\gcd(k,q)}})^{r\lambda}\cdot \Big((\rho_1, \rho_2,\ldots,\rho_b),(\sigma_{j,l})\Big)=
\Big((\rho'_1, \rho'_2,\ldots,\rho'_b),(\sigma'_{j,l})\Big)
\]
so the map induced by $\phi$ is injective as required.

Finally we consider the image of the induced map. Given an element of $(\C^\times)^b\times \A_{\mu,\omega}$ represented by 

\[
\Big((\psi_1, \psi_2,\ldots,\psi_b),(\tau_{j,l})\Big)
\]
choose a $\lambda$-th root $\rho_1$ of $\psi_1$ and a $\gcd(d,k)$-th root $\rhohat$ of $\rho_1$. We set $\rho_i=\psi_i$ for $i\geq 2$ and $\sigma_{j,l}= \rhohat^{-\alpha_l}\tau_{j,l}$. Then $\Big((\rho_1, \rho_2,\ldots,\rho_b),(\sigma_{j,l})\Big)$ is in the variety $V_{\mu,\omega,\xi}$ if and only if Equation \ref{newervariety} is satisfied by these coordinates.  When this happens the image is precisely $\Big((\psi_1, \psi_2,\ldots,\psi_b),(\tau_{j,l})\Big)$. Equation \ref{newervariety} translates in these coordinates into the equation
\begin{equation}\label{newestvariety}
\psi_1^\kappa \cdot \prod_{i=2}^b  \psi_i^{\sum_{j\in J}a_{ji}j/g}=\xi.
\end{equation}
Since the variables $\tau_{j,l}$ are unconstrained the image is the product of a codimension-1 subvariety $T$ of $(\C^\times)^b$ with the cyclic singularity $\A_{\mu,\omega}$. 

Finally we will show that the exponents appearing in Equation \ref{newestvariety} are coprime, from which it follows that the subvariety $T$ is a single torus. Recall that $g$ is the greatest common divisor of the parts $j_i$ of the partition. We form a matrix $B$ in $SL_b(\Z)$ with first column row given by $b_{1i}=j_i/g$, which is possible since the numbers $j_i/g$ are coprime. Then the first row of the product $BA$ has entries $(BA)_{1i}=\sum\limits_j a_{ji}j/g$. Since the matrix $BA$ has determinant $1$ these entries are coprime. Dividing the first entry by $\lambda$ to get the exponent $\kappa$ of $\psi_1$ does not change the greatest common divisor. This completes the proof.

\section{Proof of Corollary \ref{corollary}}\label{proof of corollary}

The cyclic singularities are contractible so to prove Corollary \ref{corollary} it suffices to show that 
\[
\left|\coprod\limits_{\omega\in C_{\gcd(g(\mu), k)}}\   X_{\mu,\omega}\right|=\displaystyle \frac{g(\mu)}a\sum_{s=0}^{a-1} \gcd(a,s)
\]
 where $\displaystyle a=\gcd\left(g(\mu),\frac n {g(\mu)}, k,\frac n k\right)$.

There are $h=\gcd(g,k)$ possible values for $\omega$. Writing $\omega$ as the $p$th power of the generator of $C_h$, the order of $\omega$ is $h/\gcd(h,p)$ so, setting $k'=n/k$,  the cardinality is
\begin{align*}
\sum_{\omega\in C_h} \gcd(g/|\omega|, k')&=\sum_{p=0}^{h-1} \gcd\left(\frac{g\gcd(h,p)}h,k'\right)\\
&=\sum_{p=0}^{h-1} \gcd(\gcd(g,gp/h),k')=\sum_{p=0}^{h-1} \gcd(g,gp/h,k').
\end{align*}
Letting $h'=\gcd(g,k')$ and $a=hh'/g$ we observe that
\[
a=\frac{\gcd(g,k)\gcd(g,k')}g=\frac{\gcd(g^2,gk,gk',kk')}g=\gcd(g,k,k',n/g).
\]
The cardinality is now given by
\begin{align*}
\sum_{p=0}^{h-1} \gcd(h',gp/h)&=\sum_{p=0}^{h-1} \gcd(h',ph'/a)=\frac{h'}a\sum_{p=0}^{h-1} \gcd(a,p)\\
&=\frac{h'h}{a^2}\sum_{s=0}^{a-1} \gcd(a,s)=\frac{g}a\sum_{s=0}^{a-1} \gcd(a,s).
\end{align*}
The third equality follows from the fact that $\gcd(a,p)$ repeats with period $a$, since $\gcd(a,p+a)=\gcd(a,p)$.

We remark that $a$ is symmetric in the interchange of $k$ with $k'=n/k$ giving the same number of components in each stratum for a group and its Langlands dual.

\section{Proof of Theorem \ref{maintheorem2}}\label{proof of main2}

Let $T$ denote the standard maximal torus in $SU_n(\C)$ consisting of diagonal matrices with entries of modulus $1$ and with determinant $1$. Let $T_k$ denote the maximal torus $T/C_k$ in $SU_n(\C)/C_k$.  The Weyl group is again the permutation group $W=\fS_n$ and conjugacy classes of elements correspond to partitions of $n$.

The fixed set in the maximal torus $T_k=T/C_k$ corresponding to a partition $\mu$ is a subspace of the fixed set for the action on $\bS_k=\bS/C_k$ considered in Section \ref{maintheoremsection}. Specifically the fixed set in $T_k$ is the subspace of the fixed set in $\bS_k$ where each coordinate is required to have modulus $1$.

Recall that the points of the fixed set in $\bS_k$ are parametrised by elements $\omega\in C_k$, with order dividing $h=\gcd(g,k)$, along with tuples of variables $(a_{j,i})$ where $j$ ranges over the sizes of parts of the partition and $i$ ranges from $1$ to $m_j$, the multiplicity of $j$ in the partition. These must satisfy the equation
\[
\Big(\prod_{i=1}^{m_1}a_{1,i}\Big)^{1}\Big( \prod_{i=1}^{m_2}a_{2,i}\Big)^{2}\Big( \prod_{i=1}^{m_3}a_{3,i}\Big)^{3}\dots=\Omega^{-1}
\]
where $\Omega=\omega^{\sum_{j=1}^n m_j\frac{(j-1)j}2}$, see Equation \ref{eq: omega}. We now require one additional constraint per variable, namely $|a_{j,i}|=1$. The coordinates are not uniquely determined as we must factor out the action of $C_k$: changing each $a_{j,i}$ by a factor of $\zeta=e^{2\pi \sqrt{-1}/k}$ gives the same point of the quotient.

It is convenient to introduce polar coordinates $\theta_{j,i}$ such that $a_{j,i}=e^{2\pi\sqrt{-1}\, \theta_{j,i}}$. Since $i$ is used as an index, we will not use it to denote the square root of $-1$.  Taking $\Theta$ such that $\Omega=e^{2\pi\sqrt{-1}\, \Theta}$ the above equation yields
\begin{equation}
\sum_{j}\sum_{i=1}^{m_j} j\, \theta_{j,i} = -\Theta\mod \Z
\label{eq: Theta}
\end{equation}
where $j$ ranges over the parts of $\mu$. We denote the space of points $(\theta_{j,i})$ satisfying this equation by $V_{\mu,\omega}$.

For each $\mu$, we must identify the quotient of this fixed set by the centraliser $Z(\mu)$ of this permutation.  Recall that the centraliser is generated by cycles $\tau_{j,i}$ which act by multiplying $a_{j,i}$ by $\omega$, and by the symmetric groups $\fS_{m_j}$ which act by permuting the coordinates $(a_{j,1},\dots,a_{j,m_j})$. 

Lifting from the variables $a_{j,i}$ to $\theta_{j,i}$ has the effect of `lifting' the centraliser as follows. The variables $\theta_{j,i}$ are determined by $a_{j,i}$, only modulo $\Z$. The group $C_k$ acts by multiplying each $a_{j,i}$ by $\zeta=e^{2\pi\sqrt{-1}\, \frac 1k}$ and hence the action lifts to an action of the infinite cyclic group $\Z$, where the generator, which we denote $\zetilde$ acts as a shift by $\frac1k$ on each $\theta_{j,i}$. 
It is easy to see that the cyclic group $\langle \omega \rangle$ is generated by $e^{2\pi \sqrt{-1}\, \frac 1{|\omega|}}$, thus the variables $\theta_{j,i}$ can additionally be shifted by any integer multiple of $\frac 1{|\omega|}$. Permuting the variables $a_{j,i}$ simply corresponds to permuting $\theta_{j,i}$. Hence the action of the centraliser corresponds to the action on the $\theta_{j,i}$ coordinates generated by the lattice with coordinates in $\frac 1{|\omega|}\Z$, the shift $\zetilde$, and by these permutations.  We will denote this group by $\widetilde{Z}(\mu,\omega)$.

We now introduce variables $\sigma_j=\sum_{i=1}^{m_j} \theta_{j,i}|\omega|$. These correspond to the maximal degree symmetric polynomials in the complex case.   We rewrite Equation \ref {eq: Theta} in the form
\begin{equation*}
\sum_{j}\frac{j}{|\omega|} \sigma_{j} = -\Theta\mod \Z
\end{equation*}
where the sum is taken over the distinct parts $j$ of $\mu$. The action of $\zetilde$ has the effect of shifting $\sigma_j$ by $\frac {m_j|\omega|}k$ and hence changes $\sum_j \frac j{|\omega|}\sigma_j$ by
$$\sum_j  \frac j{|\omega|}\,\frac {m_j|\omega|}k=\frac{n}{k}.$$
Note that shifting the value of a $\theta_{j,i}$ by $\frac 1{|\omega|}$ has the effect of shifting the value $\sigma_j$ by $1$ and hence changing $\sum_j \frac j{|\omega|}\sigma_j$ by $\frac{j}{|\omega|}$.

It follows that the sum $\sum_j \frac j{|\omega|}\sigma_j$ can be shifted by any multiple of $\gcd( \frac nk, \frac g{|\omega|})$ where $g$ is the greatest common divisor of the parts $j$.  Thus we can assume without loss of generality that $\sum_{j}\frac{j}{|\omega|} \sigma_{j}$ lies in the finite set
$$X_{\mu,\omega}=\Big\{-\Theta,-\Theta+1,\dots,-\Theta+\gcd\Big( \frac nk, \frac{g}{|\omega|}\Big)-1\Big\}.$$
Indeed as every element of $\widetilde{Z}(\mu,\omega)$ shifts the sum by a multiple of $\gcd( \frac nk, \frac{g}{|\omega|})$, these different values give $\gcd( \frac nk, \frac{g}{|\omega|})$ distinct components of the quotient.

Now fixing the value of $\sigma_j$ we consider tuples $\theta_{j,1},\dots, \theta_{j,m_j}$ such that the sum $\sum_{i=1}^{m_j}\theta_{j,i}|\omega|$ yields this value. We use Morton's description of symmetric products of circles, see \cite{M}. We will consider the subgroup $\widetilde{Z}_0(\mu,\omega)$ of $\widetilde{Z}(\mu,\omega)$ generated by the sublattice
\[
\{(\nu_{j,i}) : \nu_{j,i}\in \frac{1}{|\omega|}\Z \text{ for all $j,i$ and } \nu_{j,1}+\dots+\nu_{j,m_j}=0\text{ for all $j$}\}
\]
along with the permutation groups $\fS_{m_j}$. Note that this subgroup preserves each of the variables $\sigma_j$. By adding integer multiples of $\frac 1{|\omega|}$ (totalling zero) to the variables $\theta_{j,i}$ we can assume that the minimum and maximum values differ by at most $\frac 1{|\omega|}$. Additionally, the action of the symmetric group allows us to arrange the variables $\theta_{j,i}$ in ascending order, thus we may assume that
\begin{equation}
\theta_{j,1}\leq \theta_{j,2}\leq\dots \leq \theta_{j,m_j}\leq \theta_{j,1}+\frac1{|\omega|}.
\label{eq: Morton}
\end{equation}
Moreover this condition yields a unique representative for each orbit of the group $\widetilde{Z}_0(\mu,\omega)$.

For $x\in X_{\mu,\omega}$ we denote by $V_{\mu,\omega,x}$ the subspace of $V_{\mu,\omega}$ defined by the equation
\begin{equation}
\sum_{j}\frac{j}{|\omega|} \sigma_{j}=x
\label{eq: V_mu,omega,x}
\end{equation}
along with Morton's inequalities \ref{eq: Morton}. For each $j$, we observe that for a fixed value of $\sigma_j$, the set of points $(\theta_{j,1},\dots,\theta_{j,m_j})$ satisfying the inequalities form a simplex of dimension $m_j-1$. The variables $\sigma_j$ can take any real values, subject to the constraint \ref{eq: V_mu,omega,x}, hence these lie in a codimension $1$ affine subspace of $\R^b$. It follows that the space $V_{\mu,\omega,x}$ is a product of $\R^{b-1}$ with a polysimplex whose component simplices have dimensions $m_j-1$. We will think of $V_{\mu,\omega,x}$ as a bundle of polysimplices over the space $\R^{b-1}$.

We now consider elements of $\widetilde{Z}(\mu,\omega)$ which preserve Morton's inequalities, \ref{eq: Morton}. Let $W_j$ denote the element defined by
\[
(\theta_{j,1},\dots,\theta_{j,m_j})\mapsto (\theta_{j,2},\dots \theta_{j,m_j}, \theta_{j,1}+\frac 1{|\omega|}).
\]
This shifts the sum $\sum_{i=1}^{m_j}\theta_{j,i}$ by $\frac 1{|\omega|}$ and hence shifts $\sigma_j$ by $1$, while leaving all other $\sigma_{j'}$ invariant, and preserves Morton's inequalities. Note that $\zetilde$ also preserves these inequalities and that the elements $W_j$ along with $\zetilde$ and $\widetilde{Z}_0(\mu,\omega)$ generate the whole of $\widetilde{Z}(\mu,\omega)$.

Each $W_j$ changes $\sum_{j}\frac{j}{|\omega|} \sigma_{j}$ by $\frac{j}{|\omega|}$ and hence takes this value out of the set $X_{\mu,\omega}$.
Similarly the shift $\zetilde$ changes the sum by $\frac{n}{k}$ so we must consider compositions of the form $\zetilde^\gamma\prod_j W_j^{\beta_j}$
where
$$\gamma \frac{n}{k}+\sum_j \frac{j}{|\omega|}\beta_j=0.$$
Let $L_{\mu,\omega}$ denote the group of elements of this form.
The quotient by $\widetilde{Z}(\mu,\omega)$ of the space $V_{\mu,\omega}$ is thus the disjoint union
\[
\coprod_{x\in X_{\mu,\omega}} V_{\mu,\omega,x}/L_{\mu,\omega}
\]
We remark that the elements $\zetilde, \{W_j\}$ are commuting elements of infinite order, so $L_{\mu,\omega}$ is an abelian group of rank $b-1$,
indeed $L_{\mu,\omega}$ can be identified as a quotient of the lattice
\[
\{(\gamma,(\beta_j))\in \Z\times\Z^b\,:\, \gamma \frac{n}{k}+\sum_{j}\frac{j}{|\omega|} \beta_j=0\}
\]
of rank $b$, where the kernel is the cyclic subgroup generated by the element
\[
\gamma=\frac k{|\omega|},\quad \beta_j=-m_j.
\]
It is easy to see that this is in the kernel and moreover for the linear part of the affine map $W_j^{\beta_j}$ to be the identity $m_j$ must divide $\beta_j$.

We will now compute the stabiliser of the fibres of $V_{\mu,\omega,x}$, i.e.\ we will determine which elements $\zetilde^\gamma\prod_j W_j^{\beta_j}$ preserve all the $\sigma_j$ variables.  This will hold when
$$\frac{\gamma m_j|\omega|}k +\beta_j=0$$
for each $j$. The elements $(\gamma,(\beta_j))$ satisfying this equation form an infinite cyclic group whose generator is given by
\[
\gamma=\frac{k}{\gcd(m|\omega|,k)},\quad \beta_j=-\frac{m_j}{\gcd(m,k/|\omega|)}.
\]
The action of $\zetilde^\gamma\prod_j W_j^{\beta_j}$ on each simplex factor of the fibre is then the $\beta_j$ power of the $m_j$-cycle on the vertices. This decomposes as a product of $\frac{m_j}{\gcd(m,k/|\omega|)}$ disjoint $\gcd(m,k/|\omega|)$-cycles on each simplex. The effective action is thus by a group of order $\gcd(m,k/|\omega|)$ and each simplex can be regarded as a join of $m_j/\gcd(m,\frac{k}{|\omega|})$ simplices of dimension $\gcd(m,\frac{k}{|\omega|})-1$ on each of which this group acts by cyclically permuting the vertices.

The action on each simplex of the join is orientable or not depending on whether or not the dimension $\gcd(m,\frac{k}{|\omega|})-1$ is even. It follows that the action on the join preserves orientation precisely when $(\gcd(m,\frac{k}{|\omega|})-1)m_j/\gcd(m,\frac{k}{|\omega|})$ is even and that the action on the polysimplex preserves the orientation when
\[
\sum_j \frac{(\gcd(m,\frac{k}{|\omega|})-1)m_j}{\gcd(m,\frac{k}{|\omega|})}=\sum_j m_j\Big(1-\frac{1}{\gcd(m,\frac{k}{|\omega|})}\Big)=c-\frac{c}{\gcd(m,\frac{k}{|\omega|})}
\]
is even. Hence this action preserves the orientation on the fibres if and only if $c$ is odd or the $2$-adic norms satisfy $|c|_2<|\gcd(m,\frac{k}{|\omega|})|_2$.

To complete the proof we observe that $L_{\mu,\omega}$ is the product of a free abelian group $\Gamma_{\mu,\omega}$ of rank $b-1$ with the cyclic group $C_{\gcd(m,\frac{k}{|\omega|})}$ and define $E_{\mu,\omega}=V_{\mu,\omega,x}/\Gamma_{\mu,\omega}$. Note that up to a homeomorphism induced by translation, this is independent of $x\in X_{\mu,\omega}$.  The quotient $E_{\mu,\omega}$ is a bundle of polysimplices over a torus of dimension $b-1$

Thus we have
\[
T\q W \cong \coprod_\mu\coprod_{\omega\in C_{\gcd(g(\mu), k)}}\coprod_{x\in X_{\mu,\omega}} V_{\mu,\omega,x}/L_{\mu,\omega} \cong \coprod_\mu\coprod_{\omega\in C_{\gcd(g(\mu), k)}}\coprod_{x\in X_{\mu,\omega}}  E_{\mu,\omega}/C_{\gcd(m(\mu),\frac{k}{|\omega|})}
\]
where $C_{\gcd(m(\mu),\frac{k}{|\omega|})}$ acts on the fibres of the bundle as described above.

It follows that each component is a bundle over a compact torus of dimension $b-1$ with fibre a cyclic quotient of a polysimplex.

\bigskip

It is instructive to consider the special case when $k=1$, for which $\omega$ can only take the value $1$.  In this case the components are the bundles $E_{\mu,1}$. These bundles are obtained by gluing polysimplex fibres using the compositions $\prod_j W_j^{\beta_j}$ where $\sum_{j} j \beta_j=0$. In this context we have the following simple lemma in the spirit of Morton \cite{M}. Indeed in the case where we have a partition of the form $1+j+\dots+j$ for some $j>1$, the space $E_{\mu,1}$ is simply a symmetric product of $m_j$ circles. Morton showed that this is a simplex bundle over a circle, which is orientable or not according to whether $m_j$ is odd or even.

\begin{lemma}
In the case $k=1$, given a partition $\mu$ of $n$ with distinct parts $j_1,\dots,j_b$ and $g=\gcd(j_1,\dots,j_b)$, the bundle $E_{\mu,1}$ is non-orientable if and only if the vectors $(j_1/g,\dots,j_b/g)$ and $(m_{j_1}-1,\dots,m_{j_b}-1)$ are linearly independent as elements of $(\Z/2)^b$.
\end{lemma}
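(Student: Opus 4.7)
My plan is to compute the orientation character of the monodromy of $E_{\mu,1}$ on its polysimplicial fibre and then to translate the resulting arithmetic condition into the stated linear algebra over $\Z/2$. For $k=|\omega|=1$ the cyclic group $C_{\gcd(m(\mu),k/|\omega|)}$ is trivial, so $L_{\mu,1}=\Gamma_{\mu,1}$ is the free abelian group of rank $b-1$ realised as the quotient of
\[
\Lambda=\{(\gamma,\beta_1,\dots,\beta_b)\in\Z^{b+1}:\gamma n+\textstyle\sum_{i=1}^b j_i\beta_i=0\}
\]
by the infinite cyclic subgroup generated by $(1,-m_{j_1},\dots,-m_{j_b})$. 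An element $(\gamma,\vec\beta)$ acts on a polysimplex fibre as $\prod_i W_{j_i}^{\beta_i}$; since $W_{j_i}$ cyclically permutes the $m_{j_i}$ vertices of the $(m_{j_i}-1)$-simplex factor, it acts there with sign $(-1)^{m_{j_i}-1}$, so the composite acts on the fibre with sign $(-1)^{\sum_i\beta_i(m_{j_i}-1)}$. Consequently $E_{\mu,1}$ is non-orientable precisely when the $\Z/2$-linear functional $\phi(\gamma,\vec\beta)=\sum_i(m_{j_i}-1)\beta_i\bmod 2$ fails to vanish identically on $L_{\mu,1}$. A preliminary verification, which I would record at once, is that $\phi$ descends from $\Lambda$ to $L_{\mu,1}$: evaluated on the defining relation $(1,-\vec m)$ it equals $-\sum_i m_{j_i}(m_{j_i}-1)$, which is even because each summand is a product of consecutive integers.

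Next I would identify the image of $\Lambda$ in $(\Z/2)^{b+1}$. Writing $j_i=g j_i'$ with $\gcd(j_1',\dots,j_b')=1$ and $N=n/g=\sum_i j_i' m_{j_i}$, the defining equation of $\Lambda$ becomes $\gamma N+\sum_i j_i'\beta_i=0$, and the vector $(N,j_1',\dots,j_b')$ is primitive since its gcd divides $\gcd(j_1',\dots,j_b')=1$. Hence $\Lambda$ is a direct summand of $\Z^{b+1}$, and a standard argument shows that $\Lambda/2\Lambda$ embeds in $(\Z/2)^{b+1}$ exactly as the codimension-one subspace annihilated by the non-zero covector $(\bar N,\bar{j_1'},\dots,\bar{j_b'})$. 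Thus $\phi$ vanishes on $\Lambda$ modulo $2$ if and only if $(0,m_{j_1}-1,\dots,m_{j_b}-1)$ is a $\Z/2$-multiple of $(\bar N,\bar{j_1'},\dots,\bar{j_b'})$, i.e.\ either (a) all $m_{j_i}-1$ are even, or (b) $m_{j_i}-1\equiv j_i'\bmod 2$ for every $i$ \emph{and} $N\equiv 0\bmod 2$.

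To finish, I would observe that the ``$N$ even'' clause in (b) is automatic: if $m_{j_i}-1\equiv j_i'\bmod 2$ for each $i$, then each $j_i' m_{j_i}\equiv(m_{j_i}-1)m_{j_i}$ is a product of consecutive integers and hence even, so $N=\sum_i j_i' m_{j_i}$ is even. Therefore orientability of $E_{\mu,1}$ is equivalent to $(m_{j_1}-1,\dots,m_{j_b}-1)\in(\Z/2)\cdot(j_1',\dots,j_b')$ inside $(\Z/2)^b$; since $(j_1',\dots,j_b')$ is non-zero modulo $2$, this is precisely linear dependence of the two vectors, and negating yields the lemma. I expect the most delicate step to be the primitivity argument in the middle paragraph, which is what allows the integer constraint defining $\Lambda$ to reduce cleanly modulo $2$ without spurious torsion; it relies on the divisibility $g\mid n$, which forces $\gcd(n,j_1,\dots,j_b)=g$.
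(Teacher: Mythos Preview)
Your proof is correct, and the overall strategy---compute the orientation sign of $\prod_j W_j^{\beta_j}$ on the polysimplex as $(-1)^{\sum_j(m_j-1)\beta_j}$ and then determine when this functional vanishes on the gluing lattice modulo $2$---matches the paper's. The execution, however, differs. The paper first eliminates the $\gamma$-coordinate: for $k=1$ one has $\zetilde=\prod_j W_j^{m_j}$, so $L_{\mu,1}$ is identified directly with $\{(\beta_j)\in\Z^b:\sum_j j\beta_j=0\}$, and the argument then proceeds by a bare-hands case split (the two dependence cases) together with an explicit two-variable witness $\beta_j=j'/g$, $\beta_{j'}=-j/g$ for the independent case. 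You instead keep the extended lattice $\Lambda\subset\Z^{b+1}$, invoke primitivity of the defining covector to identify $\Lambda/2\Lambda$ with a hyperplane in $(\Z/2)^{b+1}$, and then argue by duality that $\phi$ vanishes iff $(0,\vec m-1)$ is a multiple of $(\bar N,\vec{j'})$; your observation that $N$ is automatically even when $m_{j_i}-1\equiv j_i'\bmod 2$ is the step that collapses this back to a condition in $(\Z/2)^b$. Your route is a little more structural and avoids the ad hoc witness, at the cost of carrying the redundant coordinate $\gamma$ and the primitivity argument; the paper's route is shorter but relies on spotting the right $2\times 2$ minor.
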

\begin{proof}
The composition $\prod_j W_j^{\beta_j}$ preserves the orientation on the polysimplex if and only if $\sum_{j} (m_j-1)\beta_j$ is even. If the vectors are linearly dependent then (noting that $(j_1/g,\dots,j_b/g)$ is non-zero modulo $2$ since its entries have greatest common divisor $1$) we have either $(m_{j_1}-1,\dots,m_{j_b}-1)=(0,\dots,0)$ modulo $2$, whence each $W_j$ preserves the fibre orientation, or $(m_{j_1}-1,\dots,m_{j_b}-1)=(j_1/g,\dots,j_b/g)$ modulo $2$. In the latter case any element $(\beta_{j_1},\dots\beta_{j_b})$ in the lattice satisfies $\sum_{j}\frac{j}{g} \beta_j=0$ hence
\[
0=\sum_{j}\frac{j}{g} \beta_j\cong\sum_{j} (m_j-1)\beta_j\mod 2
\]
and again all elements of the lattice preserve orientation on the fibres.

Conversely suppose $(j_1/g,\dots,j_b/g)$ and $(m_{j_1}-1,\dots,m_{j_b}-1)$ are linearly independent modulo $2$. Then in particular there exists a pair $j,j'$ such that $(j/g,j'/g)$ and $(m_{j}-1,m_{j'}-1)$ are linearly independent modulo $2$. Set $\beta_j=j'/g, \beta_{j'}=-j/g$ and $\beta_{j''}=0$ for all other $j''$. This defines a point of the lattice and the composition $W_j^{\beta_j}W_{j'}^{\beta_{j'}}$ will reverse orientation on the polysimplex fibres since the independence of $(j/g,j'/g)$ and $(m_{j}-1,m_{j'}-1)$ modulo $2$ ensures that $(m_{j}-1)\frac {j'}g-(m_{j'}-1)\frac {j}g$ is odd.
\end{proof}

\begin{example} We consider in detail the real counterpart $SU_6(\C)$ of the example $SL_6(\C)$ considered in section \ref{examples}.
\[
\begin{array}{c|c|c|c|c|c}
 \mu&(j_1/g, \ldots, j_b/g)&(m_{j_1}-1, \ldots, m_{j_b}-1)& |X_{\mu,1}|&E_{\mu, 1}&\text{orientable?}\\
 \hline
 6&(1)&(0)&6&\Delta^0&\yes\\
 1+5&(1,5)&(0,0)&1&\bU&\yes\\
 2+4&(1,2)&(0,0)&2&\bU&\yes\\
 1+1+4&(1,4)&(1,0)&1&\bU\times\Delta^1&\yes\\
 3+3&(1)&(1)&3&\Delta^1&\yes\\
 1+2+3&(1,2,3)&(0,0,0)&1&(\bU)^2&\yes\\
 1+1+1+3&(1,3)&(2,0)&1&\bU\widetilde\times\Delta^2\cong \bU\times\Delta^2&\yes\\
 2+2+2&(1)&(2)&2&\Delta^2&\yes\\
 1+1+2+2&(1,2)&(1,1)&1&\bU\widetilde\times(\Delta^1\times\Delta^1)&\no\\
 1+1+1+1+2&(1,2)&(3,0)&1&\bU\widetilde\times\Delta^3\cong \bU\times\Delta^3&\yes\\
 1+1+1+1+1+1&(1)&(5)&1&\Delta^5&\yes\\
\end{array} 
\]

Here the notation $\bU$ denotes the unit circle while $\Delta^p$ denotes the standard p-simplex. The notation $\bU\widetilde\times F$ denotes a twisted bundle over the circle with fibre $F$, which in our case is a simplex or, more generally a polysimplex. (Recall that since $k=1$ there is no cyclic group action on the fibre.)

In the case of the $1+1+4$ partition the gluing map is $W_1^4W_4^{-1}$ which acts as the identity on the fibre so the bundle is trivial as noted in the table. In the case when $\mu=1+1+1+3$ the gluing map is $W_1^3W_3^{-1}$ which acts as a rotation of order 3 on the simplex $\Delta^2$. While this is non-trivial as a simplex bundle it is orientable and homeomorphic to the direct product.

For $\mu=1+1+2+2$ the fibre is a product of two intervals, and the gluing map, $W_1^2W_2^{-1}$ preserves the first factor and flips the second. It follows that the bundle is the product of a M\"obius band with an interval and, in particular is non-orientable. We note that modulo $2$ the vectors in the table reduce to $(1,0)$ and $(1,1)$ which are linearly independent.

When $\mu = 1+1+1+1+2$ the gluing map is again  $W_1^2W_2^{-1}$ acting as the square of a cyclic permutation of the vertices of the fibre which is a $3$-simplex. This should be thought of as the join of two $1$-simplices both of which are flipped by the gluing map, so this is a rotation of $\pi$ about an axis orthogonal to both $1$-simplices. The result is therefore homeomorphic to the direct product.
\end{example}

\section{Proof of Corollary \ref{corollary2}}

The gluing maps defining the bundle $E_{\mu,\omega}$ and the action of the cyclic group $C_{\gcd(m(\mu),k/|\omega|)}$ are products of simplicial maps on the factors of the polysimplex.  It follows that the product of the barycentres of the simplices is preserved by these actions and that the retraction onto this point defines a homotopy equivalence from the quotient $E_{\mu,\omega}/C_{\gcd(m(\mu),k/|\omega|)}$ to the $b-1$-torus over which the bundle $E_{\mu,\omega}$ is defined.

Each component is thus homotopy equivalent to $(\bU)^{b-1}$ as required.  The computation of the number of components furnished by each $\mu$ is as in Section \ref{proof of corollary}.

\newpage
\begin{appendix}
\section{Computations}
Our formula, together with the binomial formula for the Betti numbers of tori, allow us to compute the cohomology of the extended quotients corresponding to the extended affine Weyl groups considered in this paper. From the point of view of cohomology the real and complex tori $(\bU)^{b-1}, (\C^\times)^{b-1}$ are identical, hence the cohomology groups of the extended quotients $\bS_k\q W$ and $T_k\q W$ are the same. Since $K$-theory is a compactly supported theory one must be more careful about the identification of the real and complex tori, although they do, coincidentally, have the same $K$-theory. For this reason we restrict to the real case when considering $K$ theory so that the standard Chern character argument \cite{BC} provides the ranks of the K-theory groups. 

In Tables \ref{b(n,1)} and \ref{b(n,2)} below we compute the Betti numbers for small values of $n$ in the case of $\SL_n(\C)/C_k$ for $k=1,2$.  By duality these are the same as the Betti numbers for $\PSL_n(\C)$ and $\SL_n(\C)/C_{n/2}$ respectively. For $n_0$ a triangle number there is only one partition that can contribute a top dimensional class, that is $n_0=1+2+\ldots+b$ so the dimension increases and the top dimensional Betti number resets to $1$ at each of these. More generally we have the following.

\begin{lemma}
For each $n$ let $b=\lfloor \frac{\sqrt{8n+1}-3}{2}\rfloor$ and let $n_0=1+2+\ldots+b$, which is the largest triangle number less than or equal to $n$. Then $n=n_0+r$ where $0\leq r\leq b$,  the top dimensional cohomology  of $\bS\q W$ appears in dimension $b-1$, and has rank $P_2(r):=\sum\limits_{s=0}^r P(s)P(r-s)$, where $P(s)$ denotes the number of partitions of $s$ and  $P_2(r)$ is the number of partitions of $r$ into parts of two kinds. This provides a generating function for the top dimensional Betti numbers $P_2(r)$
\[
\prod\limits_{s=1}^\infty\frac{1}{(1-x^s)^2}.
\] 
\end{lemma}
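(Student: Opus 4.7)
The plan is to use the decomposition of $\bS\q W$ given by Theorem \ref{maintheorem}, identify the components that contribute in top degree, and then count them combinatorially. Each component has the form $(\C^\times)^{b(\mu)-1}\times \A_{\mu,\omega}\times X_{\mu,\omega}$; since $\A_{\mu,\omega}$ is contractible and $X_{\mu,\omega}$ is discrete, the cohomology of such a component is an exterior algebra whose top non-vanishing degree is $b(\mu)-1$. Hence the top non-vanishing degree of $H^*(\bS\q W)$ is $b-1$, where $b:=\max_\mu b(\mu)$. Since $b(\mu)$ distinct positive parts have sum at least $1+2+\cdots+b(\mu)$, this maximum is the largest integer with $n_0 := b(b+1)/2 \leq n$; the failure of the inequality for $b+1$ then forces $r := n - n_0 \leq b$.

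Next I would verify that for $b \geq 2$ every partition $\mu$ with $b(\mu) = b$ satisfies $g(\mu) = 1$. Write the distinct parts as $a_1 < \cdots < a_b$ and set $A_i := a_i - i \geq 0$, which is non-decreasing. Either $A_1 = 0$ so that $\mu$ contains $1$ as a part, or all $A_i \geq 1$; in the latter case $\sum A_i \geq b$ combined with $\sum m_i a_i = n_0 + r \leq n_0 + b$ pins down $(A_i) = (1,\ldots,1)$ and all $m_i = 1$, giving $\mu = (2,3,\ldots,b+1)$ whose parts still have gcd $1$. Consequently $|X_{\mu,1}| = \gcd(g(\mu),n) = 1$ and only $\omega = 1$ contributes, so the rank of $H^{b-1}(\bS\q W)$ equals the number of partitions of $n$ with exactly $b$ distinct parts.

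The heart of the proof is a bijection between such partitions and ordered pairs $(\lambda, \rho)$ of partitions with $|\lambda|+|\rho| = r$, whose cardinality $P_2(r)$ has generating function $\prod_{s \geq 1}(1-x^s)^{-2}$. Parametrising $\mu$ by $(A_i)$ and $(\tilde{m}_i := m_i - 1)_{i=1}^b$, a brief computation gives
\[
r = \sum_i \tilde{m}_i\, i + \sum_i A_i + \sum_i \tilde{m}_i A_i.
\]
The main obstacle, and the key combinatorial input, is the following inequality: if $\tilde{m}_{i^*} A_{i^*} \geq 1$ for some $i^*$, then by monotonicity $A_i \geq 1$ for all $i \geq i^*$, contributing at least $b-i^*+1$ to $\sum A_i$; together with $\tilde{m}_{i^*} i^* \geq i^*$ and $\tilde{m}_{i^*} A_{i^*} \geq 1$, this forces $r \geq b+2$, contradicting $r \leq b$. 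Thus the interaction term vanishes, and we may define $\lambda$ to be the partition with multiplicity $\tilde{m}_i$ at each part $i \in \{1,\ldots,b\}$ and $\rho$ to be the partition whose parts are the nonzero $A_i$'s in decreasing order. The inverse is well-defined because $\max(\lambda) \leq |\lambda| \leq r \leq b$, and the separation $\tilde{m}_i A_i = 0$ is automatic from $|\lambda| \leq r - \ell(\rho) \leq b - \ell(\rho)$. The generating function claim then follows immediately from $\sum_s P(s) x^s = \prod_{s \geq 1}(1-x^s)^{-1}$.
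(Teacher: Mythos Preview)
Your proof is correct and follows essentially the same approach as the paper: both set up the bijection that decomposes a partition with $b$ distinct parts into the pair consisting of the excess multiplicities (your $\lambda$, the paper's ``complementary partition'') and the shifts $a_i-i$ of the distinct parts away from the staircase $1,2,\dots,b$ (your $\rho$). You are in fact more careful than the paper on two points the paper leaves implicit: you verify $g(\mu)=1$ for $b\geq 2$ so that each relevant $\mu$ contributes exactly one torus, and your ``vanishing interaction term'' argument is precisely what guarantees the two pieces of data do not interfere, which is needed for the map to be a bijection.
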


\begin{proof}
The top dimensional cohomology is carried by tori corresponding to partitions which maximise the number of distinct parts $b(\mu)$, which increases at each triangle number since these are the minimal numbers which can be partitioned into a given number of distinct parts. So the dimension of the cohomology of the extended quotient is $\lfloor \frac{\sqrt{8n+1}-3}{2}\rfloor$.

Given the partition $n_0=1+2+\ldots+b$ we obtain the partitions for $n+r$ as follows. First partition $r$ as a sum $s+(r-s)$ then choose partitions of $s$ and $r-s$. We append the partition of $s$ to the partition of $n_0$ noting that this does not change the number of distinct parts but increases multiplicities. Now writing the partition of $r-s=x_1+x_2+\ldots+x_d$ in increasing order we add the terms to the last $d$ terms of the partition, which again preserves the number $b$ of distinct parts. Conversely, suppose we are given a partition of $n=n_0+r$ with $b$ distinct parts $j_1< j_2<\ldots<j_b$, which have multiplicities $m_{j_1},\ldots, m_{j_b}$. We express the partition as the union of the partition of $j_1+j_2+\ldots + j_b$ with parts $j_i$ together with its complement in the partition of $n$. Since $j_i\geq i$ it follows that $j_1+j_2+\ldots + j_b\geq n_0$, so the complementary partition is a partition of $s\leq b$, with multiplicities $m_{j_1}-1, \ldots m_{j_b}-1$. The partition $j_1+j_2+\ldots + j_b$ of $n_0+(r-s)$ is obtained from the partition $n_0=1+2+\ldots+b$ by adding $j_i- i$ to the $i$th term, where the non-zero terms $j_i-i$ define a partition of $r-s$ where $j_{i-1}-(i-1)\leq j_i-i$. Hence this produces all partitions of $n$ into exactly $b$ parts.
\end{proof}

Table \ref{K(n,k)} gives the ranks of $K_0$ and $K_1$ for $n$ ranging from $2$ to $20$ and $k$ dividing $n$ in the case of the Lie groups $\SU_n(\C)/C_k$. Note that, as remarked above, when $n$ is square free the answers do not vary with $k$. The Euler characteristic of the extended quotient is the difference of the ranks of $K_0$ and $K_1$, and we note that for $\SU_n(\C)$ (and, by duality, $\PSU_n(\C)$) this is the sum of the divisors of $n$. This follows from considering the partitions of $n$ with a single distinct part and summing the size of these parts.

\begin{table}[ht]
\[\begin{array}{|l|r|r|r|r|r|r|r|r|r|}
\hline
n\hfill&\hfill b_0\hfill&\hfill b_1\hfill&\hfill b_2\hfill&\hfill b_3\hfill&\hfill b_4\hfill&\hfill b_5\hfill&\hfill b_6\hfill&\hfill b_7\hfill&\hfill b_8 \\
\hline
 1 & 1 & \text{} & \text{} & \text{} & \text{} & \text{} & \text{} & \text{} & \text{} \\
 2 & 3 & \text{} & \text{} & \text{} & \text{} & \text{} & \text{} & \text{} & \text{} \\
 3 & 5 & 1 & \text{} & \text{} & \text{} & \text{} & \text{} & \text{} & \text{} \\
 4 & 9 & 2 & \text{} & \text{} & \text{} & \text{} & \text{} & \text{} & \text{} \\
 5 & 11 & 5 & \text{} & \text{} & \text{} & \text{} & \text{} & \text{} & \text{} \\
 6 & 20 & 9 & 1 & \text{} & \text{} & \text{} & \text{} & \text{} & \text{} \\
 7 & 21 & 15 & 2 & \text{} & \text{} & \text{} & \text{} & \text{} & \text{} \\
 8 & 35 & 25 & 5 & \text{} & \text{} & \text{} & \text{} & \text{} & \text{} \\
 9 & 42 & 39 & 10 & \text{} & \text{} & \text{} & \text{} & \text{} & \text{} \\
 10 & 61 & 60 & 18 & 1 & \text{} & \text{} & \text{} & \text{} & \text{} \\
 11 & 66 & 83 & 31 & 2 & \text{} & \text{} & \text{} & \text{} & \text{} \\
 12 & 112 & 132 & 53 & 5 & \text{} & \text{} & \text{} & \text{} & \text{} \\
 13 & 113 & 171 & 82 & 10 & \text{} & \text{} & \text{} & \text{} & \text{} \\
 14 & 168 & 253 & 129 & 20 & \text{} & \text{} & \text{} & \text{} & \text{} \\
 15 & 210 & 346 & 193 & 34 & 1 & \text{} & \text{} & \text{} & \text{} \\
 16 & 279 & 480 & 290 & 60 & 2 & \text{} & \text{} & \text{} & \text{} \\
 17 & 313 & 618 & 415 & 97 & 5 & \text{} & \text{} & \text{} & \text{} \\
 18 & 461 & 882 & 607 & 157 & 10 & \text{} & \text{} & \text{} & \text{} \\
 19 & 508 & 1107 & 841 & 242 & 20 & \text{} & \text{} & \text{} & \text{} \\
 20 & 719 & 1533 & 1192 & 372 & 36 & \text{} & \text{} & \text{} & \text{} \\
 21 & 852 & 1958 & 1627 & 551 & 63 & 1 & \text{} & \text{} & \text{} \\
 22 & 1088 & 2587 & 2248 & 816 & 105 & 2 & \text{} & \text{} & \text{} \\
 23 & 1277 & 3253 & 3006 & 1173 & 172 & 5 & \text{} & \text{} & \text{} \\
 24 & 1756 & 4376 & 4103 & 1685 & 272 & 10 & \text{} & \text{} & \text{} \\
 25 & 2006 & 5400 & 5387 & 2365 & 423 & 20 & \text{} & \text{} & \text{} \\
 26 & 2573 & 7031 & 7212 & 3318 & 642 & 36 & \text{} & \text{} & \text{} \\
 27 & 3106 & 8802 & 9403 & 4563 & 961 & 65 & \text{} & \text{} & \text{} \\
 28 & 3937 & 11304 & 12393 & 6277 & 1414 & 108 & 1 & \text{} & \text{} \\
 29 & 4593 & 13895 & 15942 & 8486 & 2054 & 180 & 2 & \text{} & \text{} \\
 30 & 5958 & 17909 & 20840 & 11480 & 2945 & 287 & 5 & \text{} & \text{} \\
 31 & 6872 & 21787 & 26510 & 15295 & 4175 & 453 & 10 & \text{} & \text{} \\
 32 & 8676 & 27629 & 34226 & 20394 & 5858 & 694 & 20 & \text{} & \text{} \\
 33 & 10305 & 33853 & 43311 & 26834 & 8138 & 1055 & 36 & \text{} & \text{} \\
 34 & 12655 & 42271 & 55286 & 35328 & 11213 & 1566 & 65 & \text{} & \text{} \\
 35 & 15009 & 51480 & 69364 & 45962 & 15313 & 2306 & 110 & \text{} & \text{} \\
 36 & 18664 & 64348 & 88029 & 59864 & 20768 & 3340 & 183 & 1 & \text{} \\
 37 & 21673 & 77496 & 109523 & 77103 & 27944 & 4796 & 295 & 2 & \text{} \\
 38 & 26559 & 95862 & 137729 & 99418 & 37385 & 6796 & 468 & 5 & \text{} \\
 39 & 31447 & 115954 & 170716 & 126960 & 49653 & 9560 & 724 & 10 & \text{} \\
 40 & 38217 & 142322 & 213011 & 162237 & 65632 & 13298 & 1107 & 20 & \text{} \\
 41 & 44623 & 170725 & 262212 & 205495 & 86178 & 18375 & 1660 & 36 & \text{} \\
 42 & 54386 & 209199 & 325553 & 260569 & 112690 & 25161 & 2461 & 65 & \text{} \\
 43 & 63303 & 249804 & 398441 & 327617 & 146468 & 34234 & 3597 & 110 & \text{} \\
 44 & 76379 & 303841 & 491402 & 412339 & 189689 & 46224 & 5203 & 185 & \text{} \\
 45 & 89696 & 363217 & 599369 & 515152 & 244298 & 62058 & 7439 & 298 & 1 \\
 \hline
\end{array}\]
\caption{The Betti numbers of the extended quotient of the maximal torus of $\SL_n(\C)$ by the Weyl group.\label{b(n,1)}}
\end{table}

\begin{table}[ht]
\[\begin{array}{|l|r|r|r|r|r|r|r|r|r|r|}
\hline
n\hfill&\hfill b_0\hfill&\hfill b_1\hfill&\hfill b_2\hfill&\hfill b_3\hfill&\hfill b_4\hfill&\hfill b_5\hfill&\hfill b_6\hfill&\hfill b_7\hfill&\hfill b_8 \hfill&\hfill b_9 \hfill\\
\hline
2 & 3 & \text{} & \text{} & \text{} & \text{} & \text{} & \text{} & \text{} & \text{} & \text{} \\
 4 & 10 & 2 & \text{} & \text{} & \text{} & \text{} & \text{} & \text{} & \text{} & \text{} \\
 6 & 20 & 9 & 1 & \text{} & \text{} & \text{} & \text{} & \text{} & \text{} & \text{} \\
 8 & 40 & 27 & 5 & \text{} & \text{} & \text{} & \text{} & \text{} & \text{} & \text{} \\
 10 & 61 & 60 & 18 & 1 & \text{} & \text{} & \text{} & \text{} & \text{} & \text{} \\
 12 & 122 & 139 & 54 & 5 & \text{} & \text{} & \text{} & \text{} & \text{} & \text{} \\
 14 & 168 & 253 & 129 & 20 & \text{} & \text{} & \text{} & \text{} & \text{} & \text{} \\
 16 & 306 & 505 & 295 & 60 & 2 & \text{} & \text{} & \text{} & \text{} & \text{} \\
 18 & 461 & 882 & 607 & 157 & 10 & \text{} & \text{} & \text{} & \text{} & \text{} \\
 20 & 758 & 1583 & 1210 & 373 & 36 & \text{} & \text{} & \text{} & \text{} & \text{} \\
 22 & 1088 & 2587 & 2248 & 816 & 105 & 2 & \text{} & \text{} & \text{} & \text{} \\
 24 & 1848 & 4504 & 4156 & 1690 & 272 & 10 & \text{} & \text{} & \text{} & \text{} \\
 26 & 2573 & 7031 & 7212 & 3318 & 642 & 36 & \text{} & \text{} & \text{} & \text{} \\
 28 & 4063 & 11527 & 12518 & 6297 & 1414 & 108 & 1 & \text{} & \text{} & \text{} \\
 30 & 5958 & 17909 & 20840 & 11480 & 2945 & 287 & 5 & \text{} & \text{} & \text{} \\
 32 & 8939 & 28109 & 34516 & 20454 & 5860 & 694 & 20 & \text{} & \text{} & \text{} \\
 34 & 12655 & 42271 & 55286 & 35328 & 11213 & 1566 & 65 & \text{} & \text{} & \text{} \\
 36 & 19041 & 65152 & 88616 & 60021 & 20778 & 3340 & 183 & 1 & \text{} & \text{} \\
 38 & 26559 & 95862 & 137729 & 99418 & 37385 & 6796 & 468 & 5 & \text{} & \text{} \\
 40 & 38892 & 143835 & 214203 & 162609 & 65668 & 13298 & 1107 & 20 & \text{} & \text{} \\
 42 & 54386 & 209199 & 325553 & 260569 & 112690 & 25161 & 2461 & 65 & \text{} & \text{} \\
 44 & 77335 & 306262 & 493588 & 413151 & 189794 & 46226 & 5203 & 185 & \text{} & \text{} \\
 46 & 106879 & 438330 & 734529 & 643953 & 313641 & 82762 & 10542 & 476 & 2 & \text{} \\
 48 & 151344 & 633466 & 1092391 & 995271 & 510521 & 144834 & 20594 & 1137 & 10 & \text{} \\
 50 & 206440 & 893139 & 1596122 & 1515435 & 817909 & 248268 & 38983 & 2555 & 36 & \text{} \\
 52 & 286682 & 1268240 & 2329944 & 2290931 & 1294065 & 417826 & 71771 & 5463 & 110 & \text{} \\
 54 & 390133 & 1771783 & 3355161 & 3420111 & 2020028 & 691336 & 128924 & 11196 & 300 & \text{} \\
 56 & 534934 & 2480996 & 4820172 & 5072447 & 3119948 & 1126632 & 226551 & 22131 & 747 & 2 \\
 58 & 719869 & 3424517 & 6843460 & 7441500 & 4763117 & 1809963 & 390279 & 42387 & 1742 & 10 \\
 60 & 979775 & 4745295 & 9701610 & 10856045 & 7205703 & 2870751 & 660346 & 78961 & 3846 & 36 \\

 \hline
\end{array}
\]
\caption{The Betti numbers of the extended quotient of the maximal torus of $\SL_n(\C)/C_2$ by the Weyl group.\label{b(n,2)}}
\end{table}

\newpage
\newgeometry{margin=0.3in}
\begin{landscape}
\begin{table}[ht]
\[
\begin{array}{|c|r|r|r|r|r|r|r|r|r|r|r|r|r|r|r|r|r|r|r|r|}
 \hline
n, k&1&2&3&4&5&6&7&8&9&10&11&12&13&14&15&16&17&18&19&20\\
 \hline
 2 & 
\begin{array}{c}
 3 \\
 0 \\
\end{array}
 & 
\begin{array}{c}
 3 \\
 0 \\
\end{array}
 & \text{} & \text{} & \text{} & \text{} & \text{} & \text{} & \text{} & \text{} & \text{} & \text{} & \text{} & \text{} & \text{} & \text{} & \text{}
& \text{} & \text{} & \text{} \\ \hline
 3 & 
\begin{array}{c}
 5 \\
 1 \\
\end{array}
 & \text{} & 
\begin{array}{c}
 5 \\
 1 \\
\end{array}
 & \text{} & \text{} & \text{} & \text{} & \text{} & \text{} & \text{} & \text{} & \text{} & \text{} & \text{} & \text{} & \text{} & \text{} & \text{}
& \text{} & \text{} \\ \hline
 4 & 
\begin{array}{c}
 9 \\
 2 \\
\end{array}
 & 
\begin{array}{c}
 10 \\
 2 \\
\end{array}
 & \text{} & 
\begin{array}{c}
 9 \\
 2 \\
\end{array}
 & \text{} & \text{} & \text{} & \text{} & \text{} & \text{} & \text{} & \text{} & \text{} & \text{} & \text{} & \text{} & \text{} & \text{} & \text{}
& \text{} \\ \hline
 5 & 
\begin{array}{c}
 11 \\
 5 \\
\end{array}
 & \text{} & \text{} & \text{} & 
\begin{array}{c}
 11 \\
 5 \\
\end{array}
 & \text{} & \text{} & \text{} & \text{} & \text{} & \text{} & \text{} & \text{} & \text{} & \text{} & \text{} & \text{} & \text{} & \text{} & \text{}\\
 \hline
 6 & 
\begin{array}{c}
 21 \\
 9 \\
\end{array}
 & 
\begin{array}{c}
 21 \\
 9 \\
\end{array}
 & 
\begin{array}{c}
 21 \\
 9 \\
\end{array}
 & \text{} & \text{} & 
\begin{array}{c}
 21 \\
 9 \\
\end{array}
 & \text{} & \text{} & \text{} & \text{} & \text{} & \text{} & \text{} & \text{} & \text{} & \text{} & \text{} & \text{} & \text{} & \text{} \\ \hline
 7 & 
\begin{array}{c}
 23 \\
 15 \\
\end{array}
 & \text{} & \text{} & \text{} & \text{} & \text{} & 
\begin{array}{c}
 23 \\
 15 \\
\end{array}
 & \text{} & \text{} & \text{} & \text{} & \text{} & \text{} & \text{} & \text{} & \text{} & \text{} & \text{} & \text{} & \text{} \\ \hline
 8 & 
\begin{array}{c}
 40 \\
 25 \\
\end{array}
 & 
\begin{array}{c}
 45 \\
 27 \\
\end{array}
 & \text{} & 
\begin{array}{c}
 45 \\
 27 \\
\end{array}
 & \text{} & \text{} & \text{} & 
\begin{array}{c}
 40 \\
 25 \\
\end{array}
 & \text{} & \text{} & \text{} & \text{} & \text{} & \text{} & \text{} & \text{} & \text{} & \text{} & \text{} & \text{} \\ \hline
 9 & 
\begin{array}{c}
 52 \\
 39 \\
\end{array}
 & \text{} & 
\begin{array}{c}
 56 \\
 41 \\
\end{array}
 & \text{} & \text{} & \text{} & \text{} & \text{} & 
\begin{array}{c}
 52 \\
 39 \\
\end{array}
 & \text{} & \text{} & \text{} & \text{} & \text{} & \text{} & \text{} & \text{} & \text{} & \text{} & \text{} \\ \hline
 10 & 
\begin{array}{c}
 79 \\
 61 \\
\end{array}
 & 
\begin{array}{c}
 79 \\
 61 \\
\end{array}
 & \text{} & \text{} & 
\begin{array}{c}
 79 \\
 61 \\
\end{array}
 & \text{} & \text{} & \text{} & \text{} & 
\begin{array}{c}
 79 \\
 61 \\
\end{array}
 & \text{} & \text{} & \text{} & \text{} & \text{} & \text{} & \text{} & \text{} & \text{} & \text{} \\ \hline
 11 & 
\begin{array}{c}
 97 \\
 85 \\
\end{array}
 & \text{} & \text{} & \text{} & \text{} & \text{} & \text{} & \text{} & \text{} & \text{} & 
\begin{array}{c}
 97 \\
 85 \\
\end{array}
 & \text{} & \text{} & \text{} & \text{} & \text{} & \text{} & \text{} & \text{} & \text{} \\ \hline
 12 & 
\begin{array}{c}
 165 \\
 137 \\
\end{array}
 & 
\begin{array}{c}
 176 \\
 144 \\
\end{array}
 & 
\begin{array}{c}
 165 \\
 137 \\
\end{array}
 & 
\begin{array}{c}
 165 \\
 137 \\
\end{array}
 & \text{} & 
\begin{array}{c}
 176 \\
 144 \\
\end{array}
 & \text{} & \text{} & \text{} & \text{} & \text{} & 
\begin{array}{c}
 165 \\
 137 \\
\end{array}
 & \text{} & \text{} & \text{} & \text{} & \text{} & \text{} & \text{} & \text{} \\ \hline
 13 & 
\begin{array}{c}
 195 \\
 181 \\
\end{array}
 & \text{} & \text{} & \text{} & \text{} & \text{} & \text{} & \text{} & \text{} & \text{} & \text{} & \text{} & 
\begin{array}{c}
 195 \\
 181 \\
\end{array}
 & \text{} & \text{} & \text{} & \text{} & \text{} & \text{} & \text{} \\ \hline
 14 & 
\begin{array}{c}
 297 \\
 273 \\
\end{array}
 & 
\begin{array}{c}
 297 \\
 273 \\
\end{array}
 & \text{} & \text{} & \text{} & \text{} & 
\begin{array}{c}
 297 \\
 273 \\
\end{array}
 & \text{} & \text{} & \text{} & \text{} & \text{} & \text{} & 
\begin{array}{c}
 297 \\
 273 \\
\end{array}
 & \text{} & \text{} & \text{} & \text{} & \text{} & \text{} \\ \hline
 15 & 
\begin{array}{c}
 404 \\
 380 \\
\end{array}
 & \text{} & 
\begin{array}{c}
 404 \\
 380 \\
\end{array}
 & \text{} & 
\begin{array}{c}
 404 \\
 380 \\
\end{array}
 & \text{} & \text{} & \text{} & \text{} & \text{} & \text{} & \text{} & \text{} & \text{} & 
\begin{array}{c}
 404 \\
 380 \\
\end{array}
 & \text{} & \text{} & \text{} & \text{} & \text{} \\ \hline
 16 & 
\begin{array}{c}
 571 \\
 540 \\
\end{array}
 & 
\begin{array}{c}
 603 \\
 565 \\
\end{array}
 & \text{} & 
\begin{array}{c}
 609 \\
 569 \\
\end{array}
 & \text{} & \text{} & \text{} & 
\begin{array}{c}
 603 \\
 565 \\
\end{array}
 & \text{} & \text{} & \text{} & \text{} & \text{} & \text{} & \text{} & 
\begin{array}{c}
 571 \\
 540 \\
\end{array}
 & \text{} & \text{} & \text{} & \text{} \\ \hline
 17 & 
\begin{array}{c}
 733 \\
 715 \\
\end{array}
 & \text{} & \text{} & \text{} & \text{} & \text{} & \text{} & \text{} & \text{} & \text{} & \text{} & \text{} & \text{} & \text{} & \text{} & \text{}
& 
\begin{array}{c}
 733 \\
 715 \\
\end{array}
 & \text{} & \text{} & \text{} \\ \hline
 18 & 
\begin{array}{c}
 1078 \\
 1039 \\
\end{array}
 & 
\begin{array}{c}
 1078 \\
 1039 \\
\end{array}
 & 
\begin{array}{c}
 1102 \\
 1057 \\
\end{array}
 & \text{} & \text{} & 
\begin{array}{c}
 1102 \\
 1057 \\
\end{array}
 & \text{} & \text{} & 
\begin{array}{c}
 1078 \\
 1039 \\
\end{array}
 & \text{} & \text{} & \text{} & \text{} & \text{} & \text{} & \text{} & \text{} & 
\begin{array}{c}
 1078 \\
 1039 \\
\end{array}
 & \text{} & \text{} \\ \hline
 19 & 
\begin{array}{c}
 1369 \\
 1349 \\
\end{array}
 & \text{} & \text{} & \text{} & \text{} & \text{} & \text{} & \text{} & \text{} & \text{} & \text{} & \text{} & \text{} & \text{} & \text{} & \text{}
& \text{} & \text{} & 
\begin{array}{c}
 1369 \\
 1349 \\
\end{array}
 & \text{} \\ \hline
 20 & 
\begin{array}{c}
 1947 \\
 1905 \\
\end{array}
 & 
\begin{array}{c}
 2004 \\
 1956 \\
\end{array}
 & \text{} & 
\begin{array}{c}
 1947 \\
 1905 \\
\end{array}
 & 
\begin{array}{c}
 1947 \\
 1905 \\
\end{array}
 & \text{} & \text{} & \text{} & \text{} & 
\begin{array}{c}
 2004 \\
 1956 \\
\end{array}
 & \text{} & \text{} & \text{} & \text{} & \text{} & \text{} & \text{} & \text{} & \text{} & 
\begin{array}{c}
 1947 \\
 1905 \\
\end{array}
 \\
 \hline
\end{array}
\]
\caption{The rank of the K-theory  of the extended quotient of the maximal torus of $\SU_n(\C)/C_k$ by the Weyl group.\label{K(n,k)}}
\end{table}
\end{landscape}
\newpage
\restoregeometry

\end{appendix}

\end{document}